\title[A Malliavin-Gamma approach to SGMs for random fields]{A Malliavin-Gamma calculus approach to Score Based Diffusion Generative models for random fields}
\author{Giacomo Greco}
\address{Università degli studi di Roma Tor Vergata}
\curraddr{RoMaDS - Department of Mathematics, 00133RM Rome, Italy.}
\email{greco@mat.uniroma2.it}
\thanks{We warmly thank Giovanni Conforti, Lorenzo Dello Schiavo and Domenico Marinucci for many useful discussions. While this work was written GG was associated to INdAM (Istituto Nazionale di Alta Matematica “Francesco
Severi”) and the group GNAMPA. 
GG is supported by the PRIN project GRAFIA (CUP:
E53D23005530006) and MUR Departement of Excellence Programme 2023-2027 MatMod@Tov (CUP: E83C23000330006).}
\begin{document}

\begin{abstract}
     We adopt a Gamma and Malliavin Calculi point of view in order to generalize Score-based diffusion Generative Models (SGMs) to an infinite-dimensional abstract Hilbertian setting. Particularly, we define the forward noising process using Dirichlet forms associated to the Cameron-Martin space of Gaussian measures and Wiener chaoses; whereas by relying on an abstract time-reversal formula, we show that the score function is a Malliavin derivative and it corresponds to a conditional expectation. This allows us to generalize SGMs to the infinite-dimensional setting. Moreover, we extend existing finite-dimensional entropic convergence bounds to this Hilbertian setting by highlighting the role played by the Cameron-Martin norm in the Fisher information of the data distribution. Lastly, we specify our discussion for spherical random fields, considering as source of noise a Whittle-Mat\'ern random spherical field.
\end{abstract}

\maketitle

\section{Introduction}

The aim of generative modelling can be summarized as \textit{creating data from noise} \cite{song2021scorebased}. In practice this means aiming at generating new data from an unknown distribution $\data$, having access only to a bunch of samples. Among all existing methods, the most promising ones (and in practice also the most employed)  are 
Score-based diffusion Generative Models (SGMs) \cite{song2021scorebased,NEURIPS2020_4c5bcfec,pmlr-v37-sohl-dickstein15,dhariwal2021diffusion}.
The core mechanism of SGMs involves two primary processes: a forward diffusion process and a reverse generation process. In the forward process, data is progressively corrupted by adding noise, effectively transforming it into a simple, known distribution (e.g., Gaussian noise). This process can be mathematically described by a stochastic differential equation (SDE) that models the gradual addition of noise over time.
The reverse process aims to recover the original data by learning to denoise the noisy data step by step. This is achieved by estimating the score function, which in the Euclidean setting corresponds to the gradient of the log-density of the forward noising process. By leveraging this score function, the model can iteratively refine noisy samples back into data samples that resemble the original data distribution. This reverse process is also governed by an SDE, which, when solved numerically, facilitates the generation of new data samples from pure noise. To be more precise, in the classical Euclidean setting $\bbRD$ the idea is to consider a noising forward diffusion process started in our data samples (distributed according to the unknown $\data$), that  converges to a known (and easy-to-sample) noising target distribution. A standard choice would be choosing a target Gaussian distribution $\cN(0,\lambda)$, whose density equals $\frm \propto\exp(-\nicefrac{ |x|^2}{2\lambda})\De x$, which means considering as forward noising process the Ornstein–Uhlenbeck process
\be\label{eq:OU:standard}
\De \fwdX_s=-\frac12 \fwdX_s\De s+\sqrt{\lambda}\De B_s\,,\quad\text{ with }\fwdX_0\sim\data\,.
\ee
In order to generate new samples from $\data$ we may then consider the backward process $\bwdX_t$ defined for any $t\in[0,T]$ as $\bwdX_t= \fwdX_{T-t}$.
A classical result in the literature about time-reversal for diffusion processes (see for instance \cite{cattiaux2021time}) states that the law of the time-reversal is a weak solution to the stochastic differential equation
\be\label{eq:bwdX_relative_score}
\bec 
\De \bwdX_t  = - \frac12\bwdX_t \De t + \lambda \nabla \log \frac{d\fwdP_{T-t}}{d\frm}(\bwdX_t)\De t + \sqrt{\lambda}\De B_t\\
\bwdX_T\sim \data\,,
\eec
\ee
where $\fwdP_{s}=\cL(\fwdX_s)$ is the law of the forward process at time $s$ and the vector field 
$\nabla \log\nicefrac{d \fwdP_s}{d\frm}$
is often referred to as the \emph{relative score}, since it corresponds to the gradient of the log-likelihood function. We denote with $\bwdP$ the law of the time-reversal process~\eqref{eq:bwdX_relative_score}, so that $\bwdP_s=\fwdP_{T-s}$. In practice, since $\data$ and its noising evolution $\fwdP_s$ are not known, the score is approximated using our samples and by relying on the relation between the relative score and conditional expectations (cf,~\Cref{thm:gamma-score} below). Similarly, since $\bwdP_0=\fwdP_T$ is not known, the backward dynamics is simulated with $\bwdX_0\sim \frm$, since for $T$ large enough $\bwdP_0=\fwdP_T$ is close to the equilibrium. We postpone a more precise description of the algorithm to \Cref{sec:scheme} (see \Cref{alg:sampling}).

\bigskip

The aim of the present article is to generalize this algorithm to an infinite-dimensional setting, by replacing the Euclidean space $\bbRD$ with a (possibly infinite-dimensional) Hilbert space $\rmH$ (e.g.~the space of spherical random fields $\rmH=\LLS$). Indeed in many applications, the data distribution one would like to consider is supported on an infinite-dimensional space, e.g. functions defined over a sphere $\sS$ like for global weather forecast \cite{Price2025Probabilistic, andrae2025continuous}, or for the analysis of Cosmic Microwave Background (CMB) lensing reconstruction  \cite{floss2024Delensing}. More generally, in recent years diffusion models have found applications in many different cosmological problems and we refer the reader to this (far from being exhaustive) list of latest references \cite{adam2022posterior, Legin2023Posterior, Ono2024Debiasing, Schanz2024Stochastic, heurtel2023removing}. Therefore the possibility of applying SGMs beyond imaging generations calls for the introduction and implementation of diffusion models with possibly infinite-dimensional state spaces. A common way of implementing SGMs in infinite-dimensional settings would be considering a discretization of the state space (\ie, an orthogonal projection onto a finite-dimensional subspace of dimension $d$) and then performing the diffusion model in the standard Euclidean space $\bbRD$. However, in doing so, one might suffer from the dimensionality dependence (in the $d\uparrow\infty$ limit there might not be a limiting noising forward process, or the convergence bounds might diverge). For this reason, defining a forward noising process directly on the infinite-dimensional state space would help solving this issues and as we shall see later provide dimension independent convergence bounds. 

\medskip

The core idea behind this paper is using a more abstract point of view, based on  $\Gamma$-calculus and Dirichlet forms, in order to define an appropriate forward noising dynamics. In doing so, we unveil the role played by Malliavin calculus, Malliavin derivatives, and Wiener chaoses, and we combine it with abstract time-reversal formulas for identifying the backward denoising process. Despite we restrict ourselves here to a Gaussian setting (\ie, the noising dynamics converges to a Gaussian measure), we believe that what we present below might be used as a guideline for engineering infinite-dimensional SGMs with different target noises, which one would like to consider taking into account the properties of $\data$ (e.g., sparsity). Then, we employ this general approach in order to get entropic convergence bounds in $\scrH$ divergence.

Lastly, let us simply mention here that the main difficulty for going beyond the pure Gaussian setting in the infinite-dimensional setting might be the lack of explicit known identities for the corresponding noising stochastic process associated to the abstract Dirichlet form. This would prevent any practical application. Moreover, in order to compute the score functional we rely on some explicit properties of the noising process. Therefore, any non-Gaussian generalization that builds upon the present work would still require the explicit knowledge of the noising process. We will address more in depth this important generalization in a follow-up work.

\subsection{The point of view of $\Gamma$-calculus and Dirichlet forms}\label{sub:abstract:finite} Firstly, let us recall some basics about $\Gamma$-calculus and Dirichlet forms in the finite-dimensional setting and try to formalise the previous ideas in a more abstract framework. We will use the ideas presented here as a guideline in our generalization attempt. Let us start by recalling that once the forward dynamics is given via the SDE~\eqref{eq:OU:standard}, one can always consider the corresponding semigroup $P_tf(x)\coloneqq \bbE[f(X_t)|X_0=x]$, its generator $Lf=\frac\lambda2\Delta f-\frac12\langle x,\nabla f\rangle$, the carr\'e du champ operator $\Gamma(f,g)\coloneqq \langle\nabla f,\nabla g\rangle$ and the associated Dirichlet form 
\be\label{eq:def:dir:OU}
\cE(f,g)\coloneqq\frac12\int\Gamma(f,g)\De \frm=\int f(-L)g\De \frm=\frac12\int \langle\nabla f,\nabla g\rangle \De \frm\,.
\ee
However, in order to define a Markov evolution one could also consider the reverse point of view, by firstly considering a Markov triple structure $(E,\frm,\Gamma)$, that is a measure space $(E,\frm)$ and a carr\'e du champ operator $\Gamma$, defined on some appropriate set of functions. For instance for the  Ornstein–Uhlenbeck process we have $E=\bbRD$ and the operator $\Gamma(f,g)=\langle\nabla f,\nabla g\rangle$ acting on the algebra of smooth compactly supported functions $\cA_0=\cC^\infty_c(\bbRD)$. From this one can then define the generator $L$ (acting on $\cA_0$) by imposing the validity of the integration by parts
\bes
\int f(-L)g\De \frm=\frac12\int\Gamma(f,g)\De\frm\,,
\ees
which in our OU setting yields to $2Lf=\lambda \Delta f-\langle x,\nabla f\rangle$ for any $f\in\cA_0$. Moreover,  we can define the Dirichlet form $\cE$ on $\cA_0\times\cA_0$ as in~\eqref{eq:def:dir:OU} and then extend its definition to the Dirichlet domain $\mathcal{D}(\cE)$, \ie, the completion of $\cA_0$ in $\rmL^2(\frm)$ w.r.t.~the norm $\|f\|_\cE\coloneqq \sqrt{\|f\|_2^2+\cE(f)^2}$ (in this setting $(\mathcal{D}(\cE),\norm{\cdot}_\cE)$ corresponds to the Sobolev space $\rmH^1_0(\frm)$). From this we can then construct the domain of the generator $\cD(L)$ as the subset of functions $f\in\cD(\cE)$ for which $\cE(f,g)\leq C(f)\|g\|_2$ for any $g\in\cD(\cE)$ and extend the definition of $L$ on its domain via the integration by parts formula. Once given the generator $L$ and its domain $\cD(L)$, the OU semigroup is formally defined as $P_t=e^{tL}$. For a more comprehensive description of this process we refer the reader to \cite[Chapter 3 and Appendix A]{bakry2013analysis}.

\medskip

\subsection{Main contributions} 
In order to define the noising dynamics, our strategy is to mimic the description given in \Cref{sub:abstract:finite}. We accomplish this in \Cref{sec:forward} where, after fixing a Gaussian target measure $\frm$ with covariance operator $C$, we consider its associated Cameron-Martin space $\cH_\frm$ and introduce the the carr\'e du champ operator $\Gamma$ and the Dirichlet forms as 
\bes
\Gamma(u,v)\coloneqq \langle \bar\nabla u,\bar\nabla v\rangle_{\cH_\frm} \,,\quad\text{ and }\quad\cE(u,v)=\frac12\int \Gamma(u,v)\,\De\frm\,,
\ees
where $\bar \nabla$ is the Malliavin gradient (associated to $\frm$). By imposing the validity of the integration by parts we then compute the generator $\fwdL$ as the operator that on cylindrical functions $u=F(I(f_1),\dots,I(f_k))$ returns
\bes
\fwdL u=-\frac12\sum_{i=1}^k(\partial_iF)(I(\underbar{f}))\,I(f_i)+\frac12\sum_{i,j=1}^k(\partial^2_{ij}F)(I(\underbar{f}))\,\langle f_i,\,f_j\rangle_{\cH_\frm}\,,
\ees
where $I\colon \cH_{\frm }\to\mathcal{R}_{\frm }$ is the canonical isomorphism between the Cameron-Martin space and the corresponding reproducing kernel Hilbert space $\mathcal{R}_{\frm }$  (\ie, the closure of $\rmH^\star$ in $\rmL^2(\rmH,\frm )$) and for any $h\in\cH_\frm$ the element $I(h)\in\rmL^2(\rmH,\frm)$ is also known as Paley–Wiener integral (w.r.t.~$h\in\cH_\frm$). We postpone to~\Cref{sec:forward} a more precise statement. Finally, there we show  that this construction yields to a well defined Markov process with state space $\rmH$, which corresponds to the Ornstein-Uhlenbeck (hereafter OU) process. We show that this uniquely characterizes our forward noising dynamics~$\fwdP$. Moreover, as soon as $\data\ll\frm$ we further deduce that $\fwdP_s\ll\frm$.

In \Cref{sec:back} we borrow from the seminal work of Cattiaux, Conforti, Gentil and Leonard \cite{cattiaux2021time} an abstract time-reversal formula 
and, we compute the generator associated to the backward dynamics $\bwdP$ as
\bes
\bwdL_t u\coloneqq \fwdL u+\frac{\Gamma(\rho_{T-t},\,u)}{\rho_{T-t}}\,,\quad\text{ where }\rho_s=\nicefrac{\De\fwdP_s}{\De \frm}\,.
\ees
Our main theoretical contribution is then showing in \Cref{thm:gamma-score} that the last term appearing in the backward generator, which hereafter we refer to as $\Gamma$-score, corresponds to a conditional expectation
\bes
\frac{\Gamma(\rho_s,\,u)}{\rho_s}(\Phi)=\frac{1}{e^{\nicefrac{s}{2}}-e^{-\nicefrac{s}{2}}}\,\bbE_{\fwdP}[I(\bar\nabla u(X_s))(X_0-e^{-\nicefrac{t}{2}}X_s)|X_s=\Phi]\,.
\ees
Up to our knowledge, this link between score and conditional expectation was known solely in the finite-dimensional setting. Moreover, our proof highlights the importance and sufficiency of having a commutation identity between Malliavin derivative and OU semigroup (cf.~\Cref{lemma:comm:nabla:Pt} below), which provides a novel useful insight also for the standard Euclidean case. The standing assumption of \Cref{sec:back} is a finite entropy (a.k.a. Kullback-Leibler divergence) condition, that is we assume $\scrH(\data|\frm)$  to be finite, where
\bes
\scrH(\rmP|\rmQ)\coloneqq \begin{cases}
    \bbE_\rmP\biggl[\log\frac{\De\rmP}{\De\rmQ}\biggr]\quad &\text{if }\rmP\ll\rmQ\,,\\
    +\infty \quad&\text{otherwise.}
\end{cases}
\ees
This condition is necessary in order to rely on \cite{cattiaux2021time}, when identifying the backward generator $\bwdL$. Moreover, it is necessary when considering convergence bounds in $\scrH$-divergence.

The link with conditional expectations allows us to mimic the training of finite-dimensional SGMs and properly define the generative model scheme. We explain this in~\Cref{sec:scheme} where we further extend existing $\scrH$-convergence results to our Hilbertian setting. In particular in \Cref{thm:KL:convergence:bound} we prove that, for any finite-dimensional marginalization $J=\{\ell_1,\dots,\ell_N\}$ of our random field ($\Phi\mapsto\Phi^J\in\rset^N$), whenever we are able to learn the score up to an error $\varepsilon$ (cf.~\Cref{ass:score:goodness} below),
the law of the generated sample $Y_{\mathrm{gen}}$ satisfies
\bes
\scrH(\data^J|\cL(Y^J_{\mathrm{gen}}))
    \leq e^{-\nicefrac{T}{2}}\,\scrH(\data|\frm)+T\,\varepsilon^2+2\,h\,\max\{4,h\} \,I(\data|\frm)\,,
\ees
where $h$, denotes the time discretization step of our sampling scheme (same as in Euler-Maruyama schemes for SDE sampling), whereas $I(\data|\frm)$ denotes the Fisher information of $\data$ w.r.t.~$\frm$ which is defined as $I(\data|\frm)\coloneqq \bbE_{\frm}\norm{\bar\nabla\sqrt{\nicefrac{\De\data}{\De\frm}}}^2_{\cH_\frm}$, with $\bar\nabla$ and $\cH_\frm$ being the Malliavin gradient and the Cameron-Martin space associated with the Gaussian measure $\frm$.
Lastly, notice that the bound we prove in \Cref{thm:KL:convergence:bound} is uniform in the finite-dimensional marginalization choice $J$, and therefore might be used in order to prove a $\scrH$-convergence bound at the level of random fields. We leave this question for future works.

Lastly, we conclude the paper with \Cref{sec:example} where we consider as state space the space of spherical random fields and where we consider as noisy Gaussian measure the law of a Whittle-Mat\'ern random field.

\bigskip

\subsection{Comparison with existing literature.}
A vast body of literature in recent years has dealt with the engineering, implementation and convergence of  score-based diffusion generative models. It is not our intention here to fully cover this vast body, but we rather aim at comparing our paper with the latest existing ones, particularly  the ones that address the infinite-dimensional setting, the ones that uses similar techniques or that inspired this work.

As regard the infinite-dimensional setting, up to our knowledge all attempts uses as noising forward dynamics an Ornstein-Uhlenbeck process (as ours) converging to a Gaussian measure with trace class covariance operator. This is the case for instance of \cite{hagemann2023multilevel, lim2023score, Pidstrigach2024Infinite}. However, in \cite{hagemann2023multilevel} the authors treat the time-reversal only for the finite-dimensional marginalization (\ie, they consider the usual Euclidean time-reversal in $\bbRD$, after projecting the forward dynamics onto a $d$-dimensional subspace), whereas in \cite{lim2023score} the authors do not work directly on the forward-backward SDE, but rather introduce ex ante as score functional the Fr\'echet derivative of the log of $\rho_t$ and sample using, without any explicit link to time-reversal (our work here can be seen as a theoretical guarantee of their choice, since we show that their score corresponds to the $\Gamma$-score field, deduced here via the abstract time-reversal formula). Lastly, \cite{Pidstrigach2024Infinite} works directly within the frame of infinite-dimensional forward-backward SDEs, by considering as forward noising process the OU process and then by showing that the strong solution to the SDE driven by conditional expectations has the same law of the time-reversal of the forward process.
Our work can therefore be thought as bridging between \cite{lim2023score} and \cite{Pidstrigach2024Infinite} since we show via an abstract time-reversal formula that the score field is the same one considered in the former and we further identify it with the conditional expectations, connection that was missing in the latter.
A key feature that distinguishes our work from what was previously done in these contributions is the use of Dirichlet forms, $\Gamma$-calculus and Malliavin calculus. Indeed, this allows us to introduce the noising dynamics in an abstract way and we have also shown, given that, how one can deduce the corresponding denoising dynamics by considering a $\Gamma$-calculus time-reversal formula borrowed from \cite{cattiaux2021time}. This paves the way to the introduction of infinite-dimensional score based diffusion models with noising process converging to a possibly non-Gaussian noise. Indeed, though this is common in finite-dimensional settings, the lack of theoretical results in the infinite-dimensional setting restricted so far their applicability to the Gaussian case. The procedure explained here can in principle be used to introduce non-standard noising procedure as well as to deduce the corresponding denoising process. This extension is  currently under investigation and we will address it in a forthcoming work.

It is worth mentioning that, concurrently on the preparation of this manuscript, in \cite{ren2025unified} the authors also noticed that SGMs and more generally  (finite-dimensional) denoising Markov models can be put within the same $\Gamma$-calculus framework leveraging the results of \cite{cattiaux2021time}. Even though their work focuses on finite-dimensional settings, still they illustrate how this $\Gamma$-calculus framework is versatile and it includes denoising Markov models employing geometric Brownian motion and jump processes
as forward dynamics. As regards the use of Malliavin calculus for SGMs, we should mention the very recent work \cite{mirafzali2025malliavincalculusscorebaseddiffusion} 
that appeared while this paper was in preparation, despite it is limited again to  finite-dimensional settings. Regarding possible applications of Malliavin calculus, it is worth mentioning also the recent work \cite{pidstrigach2025conditioning} that focuses in stochastic optimal control and conditional generative modelling, whose aim can be summarized as
 modifying a reference diffusion process to maximize a given terminal-time reward.

Lastly, the spherical random field example we consider at the end of this paper can be compared with the existing Manifold Diffusion Field generative model considered in \cite{elhag2024manifold}, whose idea is to project the random field over an orthonormal basis of eigenfunctions of the Laplace-Beltrami operator (in our example the spherical harmonics, that are the eigenfunctions of the Laplacian operator on the sphere $\Delta_\sS$) and then perform the usual OU finite-dimensional SGMs on the corresponding (truncated) vector of coordinates (in our example the $a_{\ell,m}$ coefficients).

\medskip

While this paper was under review, two recent contributions \cite{mirafzali2025malliavincalculusapproachscore,mirafzali2025scorebaseddiffusionmodelsinfinite} have appeared that are closely related to ours. In the first, the authors combine Malliavin derivatives and divergences with a new Bismut-type formula in order to deduce exact, closed-form, expression for the score function for a broad
class of non-linear diffusion generative models in finite-dimensional settings. In the latter, the authors manage to extend their work from finite- to infinite-dimensional diffusion models, using techniques for differentiating Hilbert-valued processes together with infinite-dimensional extensions of the
Bismut–Elworthy–Li formula. In particular, they obtain closed-form expressions for the score function as well in broadly general settings.

\medskip

As regards the analyses of the convergence of SGMs, in the contributions mentioned above,  authors generally focus on proving convergence bounds in Wasserstein distances, usually by requiring the Lipschitzianity of the score function or the boundedness for the support of the data distribution $\data$. Here we provide an entropic convergence bound which is a straightforward application of the strategy put forth in \cite{Conforti2025KLConvergence}. The same strategy has already been employed and generalized in  \cite{strasman2025an} for analyzing the convergence of finite-dimensional SGMs with noise schedules. It would be impossible to take into account all the convergence bounds proven in the finite settings, however we would like to conclude mentioning these two recent works \cite{gentiloni2025beyond, bruno2025wasserstein} where the authors manage to prove Wasserstein convergence bounds beyond the usual regularity assumption made on the score and on the log-concavity of $\data$.


\section{Noising dynamics in Hilbertian setting}\label{sec:forward}

The state space of our Markov forward noising diffusion is the Hilbert space $(\rmH,\langle\cdot,\cdot\rangle_\rmH)$ (as running example we will think of $\rmH=\LLS$ equipped with its scalar product, induced by the volume measure $\vol$).
We identify the dual of $\rmH$ (namely, $\rmH^\star$) with the Hilbert space itself via Riesz Theorem and denote with $\varphi^\star\in\rmH^\star$ the bounded linear functional $u\mapsto\langle \varphi,u\rangle_{H}$ on $\rmH$, associated to the element $\varphi\in\rmH$.
Next, we should introduce our noising Markovian evolution. This can be done by firstly considering a Markov triple structure $(E,\frm,\Gamma)$, that is a measure space $(E,\frm)$ and a carr\'e du champ operatpr $\Gamma$, defined on some appropriate set of functions. The measure space considered here is the Hilbert space $\rmH$ equipped with the Gaussian measure $\frm$  with symmetric positive definite covariance operator $C$. We assume $C$ to be of trace class, that is we assume $C$ to have the spectral decomposition $CY_\ell=C_\ell Y_\ell$, with $(Y_\ell)_{\ell\geq 0}$ being an orthonormal base of $\rmH$, with $C_\ell>0$ and $\sum_{\ell}C_\ell<\infty$. As running example in $\rmH=\LLS$, the reader could consider $\frm$ being the law of a Whittle-Mat\'ern spherical random field,  $(Y_\ell)_{\ell\geq 0}$ being the spherical harmonics and $(C_\ell)_{\ell\geq 0}$ being the angular power spectrum of the Whittle-Mat\'ern field, cf.~\Cref{sec:example} below.

As concerns the $\Gamma$-operator we should fix an algebra of smooth functions defined on $\rmH$, in analogy to the Euclidean setting where we have considered the algebra of smooth compactly supported functions. In view of that, let us consider the set of cylindrical  functions  
\bes
\PC\coloneqq\biggl\{p=F(I(f_1),\,\dots,\,I(f_k))\quad\colon \quad \begin{aligned}
    \,&\text{for some }k\geq 1,\,F\in\cC^\infty_\mathrm{Pol}(\rset^k)\\
    \,&\text{and }f_i\in\cH_{\frm }\quad\forall\,i=1,\dots,k
\end{aligned}\biggr\}\,,
\ees
where $\cC^\infty_\mathrm{Pol}(\rset^k)$ is the set of $k$-multivariate smooth functions with polynomial growth and with partial derivatives with polynomial growth. The set $\cH_{\frm }$ is the Cameron-Martin space associated to $(\rmH,\frm )$, that is the Hilbert space 
\bes
\cH_{\frm }\coloneqq\biggl\{h\in\rmH\quad\text{s.t. } \sum_{\ell\geq 0} C_\ell^{-1}\,|\langle h,Y_\ell\rangle_H|^2<+\infty\biggr\}\,,
\ees
with scalar product 
\bes
\langle h,k\rangle_{\cH_{\frm }}\coloneqq\langle C^{-\nicefrac12}h, C^{-\nicefrac12}k\rangle_\rmH=\sum_{\ell\geq 0}\frac{\langle h,Y_\ell\rangle_H\langle k,Y_\ell\rangle_H}{C_\ell}\,.
\ees
The functional $I\colon \cH_{\frm }\to\mathcal{R}_{\frm }$, appearing inside the cylindrical function, is the canonical isomorphism between the Cameron-Martin space and the corresponding reproducing kernel Hilbert space $\mathcal{R}_{\frm }$  (\ie, the closure of $\rmH^\star$ in $\rmL^2(\rmH,\frm )$).\footnote{For any $h\in\cH_\frm$ the element $I(h)\in\rmL^2(\rmH,\frm)$ is also known as Paley–Wiener integral (w.r.t. $h\in\cH_\frm$). On the elements in the range of the covariance operator $C$, the integral $I(h)$ is defined via $I(h)(\cdot)=(C^{-1}h)^\star(\cdot)=\langle C^{-1} h,\,\cdot\rangle_{H}$. }  The collection $(I(h))_{\{h\in\cH_{\frm }\}}$ can be thought as an isonormal Gaussian process over $\cH_{\frm}$ defined on the probability space $(\rmH,\sigma(I),\frm )$ with covariance structure $\bbE[I(h)I(k)]=\langle h,k\rangle_{\cH_{\frm }}$ (cf. Proposition 4.40 in~\cite{hairer2023spdesNOTES}). 
    For exposition's clarity, a cylindrical function $p\in\PC$ applied to any element $\varphi\in\rmH$  returns
\bes
p(\varphi)=F(I(f_1)(\varphi),\dots,I(f_k)(\varphi))\,,
\ees
and for notations' sake we will also summarize the above display by simply writing $p=F(I(\underbar{f}))=F\circ I(\underbar{f})$.  
Clearly  we have $\mathcal{R}_{\frm }\subset\PC\subset \rmL^2(\rmH,\frm )$.  
 Moreover, $\PC$ is an algebra of nowhere-vanishing functions that separates points which is dense in $\rmL^2(\rmH,\frm )$, the latter property being a straightforward consequence of the Wiener chaos decomposition \cite[Theorem 1.1.1]{Nualart2006}.

For any cylindrical function $p\in\PC$ as above, we introduce its Malliavin derivative (w.r.t. to $\cH_\frm$) as  
\be\label{eq:gradient:mall:PC}
\bar\nabla p=\sum_{i=1}^k(\partial_i F)(I(f_1),\dots,I(f_k))\,f_i\,\in\cH_\frm\,.
\ee
Notice that for any $f\in\cH_{\frm }$ we immediately have $\bar\nabla I(f)=f\in\cH_{\frm }$ and recall that the gradient operator $\bar\nabla$ here defined is closable from $\rmL^p(\rmH,\frm)$ to $\rmL^p(\rmH;\cH_\frm)$ for any $p\geq 1$ \cite[Proposition 2.3.4]{NourdinPeccati2012book}.
Then we may finally define the carr\'e du champ operator $\Gamma$ on $\PC$ as
\be\label{eq:def:Gamma}
\begin{aligned}
\Gamma(F(I(\underbar{f})),G(I&(\underbar{g})))\coloneqq\,\langle\bar\nabla (F\circ I(\underbar{f})),\,\bar\nabla (G\circ I(\underbar{g}))\rangle_{\cH_\frm}\\
=&\,\sum_{i=1}^k\sum_{j=1}^h (\partial_i F)(I(f_1),\dots,I(f_k))\,(\partial_j G)(I(g_1),\dots,I(g_h))\,\langle f_i, \,g_j\rangle_{\cH_\frm}
\end{aligned}
\ee
for any $F\in\cC^\infty_\mathrm{Pol}(\rset^k)$, $G\in\cC^\infty_\mathrm{Pol}(\rset^h)$ and $\underbar{f}=(f_1,\dots,f_k)\in (\cH_{\frm })^k $ and $\underbar{g}=(g_1,\dots,g_h)\in (\cH_{\frm })^h$. Notice that for any $f,\, g\in\cH_\frm$ we  simply have $\Gamma(I(f), I(g))=\langle f,g\rangle_{\cH_\frm}$.

The above choices induce on $\PC\times\PC$ the positive linear form 
\bes\begin{aligned}
\cE(F(I(\underbar{f})),G(I(\underbar{g})))=&\,\frac12\int_{\rmH}\Gamma(F(I(\underbar{f})),G(I(\underbar{g})))\De \frm \\
=&\,\frac12\bbE\biggl[\left\langle\bar\nabla(F\circ I(\underbar{f}))(\Phi),\bar\nabla(G\circ I(\underbar{g}))(\Phi)\,\right\rangle_{\cH_\frm}\biggr]
\end{aligned}
\ees
where in the last display $\Phi$ is a Gaussian random variable on $\rmH$ with law $\frm$. Moreover, this allows us to introduce the operator $L$ on $\PC$, as the symmetric operator 
defined via the integration by parts formula for any $F\circ I(\underbar{f}),\,G\circ I(\underbar{g})\in\PC$ as
\bes
\frac12\int_{\rmH} (G\circ I(\underbar{g}))\,L(F\circ I(\underbar{f}))\,\De \frm  =-\cE(F(I(\underbar{f})),G(I(\underbar{g})))\,.
\ees
If we fix $w\in\cH_{\frm  }$ and take $F\circ I(\underbar{f})=I(w)$, the right hand side of the above display reads as
\be\label{eq:IbP:singleton}
\begin{aligned}
\cE(I(w), G(I(\underbar{g})))=\frac12\bbE\biggl[\left\langle\bar\nabla(G\circ I(\underbar{g}))(\Phi),\,w\,\right\rangle_{\cH_\frm}\biggr]=\frac12\bbE[(G\circ I(\underbar{g}))(\Phi)\,I(w)(\Phi)]\,,
\end{aligned}\ee
where in the last step we have relied on the integration by parts formula for Malliavin calculus \cite[Lemma 1.2.1]{Nualart2006}.

 Therefore, we may define the operator $L$, at least on the reproducing kernel Hilbert space $\mathcal{R}_\frm$, as $L I(w)=-I(w)/2$. 
 In order to extend it to the whole algebra $\PC$ it is then enough imposing the chain rule/diffusion property (inherited from $\Gamma$)
\be\label{eq:L:PC}
\begin{aligned}
L(F\circ 
 I(\underbar{f}))=&\,\sum_{i=1}^k(\partial_iF)(I(\underbar{f}))\,L I(f_i)+\frac12\sum_{i,j=1}^k(\partial^2_{ij}F)(I(\underbar{f}))\,\Gamma(I(f_i),I(f_j))\\
 =&\,-\frac12\sum_{i=1}^k(\partial_iF)(I(\underbar{f}))\,I(f_i)+\frac12\sum_{i,j=1}^k(\partial^2_{ij}F)(I(\underbar{f}))\,\langle f_i,\,f_j\rangle_{\cH_\frm}\\
=&\,-\frac12\,I(\bar\nabla(F\circ I(\underbar{f}))+\frac12\sum_{i,j=1}^k(\partial^2_{ij}F)(I(\underbar{f}))\,\langle f_i,\,f_j\rangle_{\cH_\frm}\,.
\end{aligned}
\ee

\begin{remark}
Let us notice that the integration by parts step applied in~\eqref{eq:IbP:singleton} can be equivalently deduced from the Cameron-Martin Theorem. We sketch here the formal proof since it highlights the importance of the Cameron-Martin space $\cH_\frm$. The main idea is the simple observation (see \cite[Definition 1.2.1]{Nualart2006} and following discussion) that
\bes
\left\langle\bar\nabla(G\circ I(\underbar{g}))(\Phi),\,w\,\right\rangle_{\cH_\frm}=\lim_{t\downarrow 0} \frac1t[G\circ I(\underbar{g})(\Phi+tw)-G\circ I(\underbar{g})(\Phi)]\,.
\ees
Combining this with the change of variable induced  by the shift function $e^{tw}.f(x)\coloneqq f(x+tw)$ and with the Cameron-Martin Theorem
\bes
\bbE[G\circ I(\underbar{g})(\Phi+tw)]=\int (G\circ I(\underbar{g}))\,\biggl(\frac{\De (e^{tw}.)_{\#}\frm  }{\De\frm  }\biggr)\De\frm =\int (G\circ I(\underbar{g}))\,e^{I(t\,w)-\frac{\norm{t\,w}^2_{\cH_{\frm  }}}{2}}\,\De \frm\,,
\ees
 at least formally, yields to
\bes
\begin{aligned}
\cE(I(w), G(I(\underbar{g})))=&\,\frac12\bbE\biggl[\left\langle\bar\nabla(G\circ I(\underbar{g}))(\Phi),\,w\,\right\rangle_{\cH_\frm}\biggr]=\frac12\bbE[\De_t|_{t=0}\, (e^{tw}.\,G\circ I(\underbar{g}))]\\
=&\,\frac12\int (G\circ I(\underbar{g}))\,\biggl(\De_t|_{t=0} \frac{\De (e^{tw}.)_{\#}\frm  }{\De\frm  }\biggr)\De\frm\\
=&\,\frac12\int (G\circ I(\underbar{g}))\,I(w)\,\De \frm
=\frac12\bbE[(G\circ I(\underbar{g}))\,I(w)] \,.
\end{aligned}\ees
This approach also explains why we are considering the rigging of $\rmH$ with the Cameron-Martin space $\cH_\frm$. Indeed, according to Cameron-Martin Theorem, the shifted measure $(e^{h}.)_{\#}\frm $ is absolutely continuos w.r.t. $\frm$ if and only if $h\in\cH_\frm$, for which we then have
\bes
\frac{\De(e^{h}.)_{\#}\frm  }{\De \frm  }(\varphi)=\exp\biggl(I(h)(\varphi)-\nicefrac{\norm{h}^2_{\cH_{\frm  }}}{2}\biggr)\,.
\ees
\end{remark}

\medskip

In conclusion, by imposing the validity of the integration by parts formula we have defined $L$ on the cylindrical test functions via \eqref{eq:L:PC}. This is enough since $L(\PC)\subset \PC$, which allows also to prove that the corresponding Dirichlet form $\cE$ is closable 
(cf. \cite[Section 3.1.4]{bakry2013analysis}),
and can be extended on its domain $\cD(\cE)$ which is the completion of $\PC$ w.r.t. the norm $\|p\|_\cE\coloneqq[\|p\|^2_{\rmL^2(\rmH,\frm)}+\cE(p)]^{\nicefrac12}$. Then, $L$ can be extended, via the integration by parts formula, from $\PC$ to its domain $\cD(L)$ which is defined as the set
\bes
\cD(L)\coloneqq \{p\in\cD(\cE)\eqsp\colon\eqsp\exists\, C(p)>0\text{ s.t. }\cE(p,q)\leq C(p)\|q\|_{\rmL^2(\rmH,\frm)}\eqsp\forall\,q\in\cD(\cE)\}\,.
\ees
This construction is often referred to as the Friedrichs self-adjoint extension of the operator $L$. Indeed, the operator $L$ designed here is always self-adjoint (on its domain) and is the infinitesimal generator of a symmetric semigroup $(P_t)_{t\geq0}$ in $\rmL^2(\rmH,\frm)$ with invariant measure $\frm$ (cf.~\cite[Section 3.1.4]{bakry2013analysis}). On $\cD(L)$ we have
\bes
\partial_t P_t=L\,P_t=P_t\,L\,.
\ees
Moreover, it is positive preserving and sub-Markov (cf~\cite[Section 1.3.5]{bakry2013analysis}). Actually, this semigroup is well-known and it corresponds to  OU process  on $\rmH$ (see for instance \cite[Section 1.4]{Nualart2006}). To see this, is enough noticing that our generator and the OU generator coincide on our cylindrical algebra $\PC$ \cite[Proposition 1.4.4]{Nualart2006}, that $\cD(L)=\mathbb{D}^{2,2}$ (the completion of $\PC$ under the norm $\|\phi\|_{2,2}=(\|\phi\|^2_{\rmL^2(\rmH,\frm)}+\|\bar\nabla\phi\|^2_{\cH_\frm}+\|\bar\nabla^2\phi\|^2_{\cH_\frm^{\otimes 2}})^{\nicefrac12}$) and that $L$ is essentially self-adjoint, \ie, that $\PC$ is a core for $L$ (meaning that  $\PC$ is dense in $\cD(L)$ in the graph norm $\|\cdot\|_{2,2}$) \cite[Corollary 1.5.1]{Nualart2006}. For the construction of this Dirichlet structure we refer the reader also to \cite[Chapter II Section 3]{MaRockner1992book}. Lastly, owing to \cite[Section 1.4]{Nualart2006} we may express $L$ and the corresponding semigroup $(P_t)_{t\geq 0}$ in terms of Wiener chaoses. In view of that, let us consider the Wiener chaos decomposition 
\bes
\rmL^2(\rmH,\frm)=\bigoplus_{n\geq 0}\cH_n\,,
\ees
where the Wiener chaos of order n $\cH_n$ is defined as the closed linear subspace of $\rmL^2(\rmH,\frm)$ generated by the set $\{H_n\circ I(h)\,\colon\,h\in\cH_\frm\text{ s.t. }\norm{h}_{\cH_\frm}=1\}$ with $H_n$ being the nth Hermite polynomial. If $\Pi_n\colon\rmL^2(\rmH,\frm)\to\cH_n$ denotes the orthogonal projection onto the $n^{th}$ chaos, then for any $u\in\rmL^2(\rmH,\frm)$ we have
\be\label{eq:with:chaos}
P_t u=\sum_{n\geq 0}e^{-\nicefrac{nt}{2}}\Pi_n u\,,\quad\text{ and }\quad L u=\sum_{n\geq 0}-\frac{n}{2} \,\Pi_nu
\ee
and $\cD(L)=\{u\in\rmL^2(\rmH,\frm)\,\colon\,\sum_{n\geq 0}n^2\,\norm{\Pi_n u}_{\rmL^2(\rmH,\frm)}^2<+\infty\}$. 
Finally, owing to Mehler's formula \cite[Equation 1.67]{Nualart2006} the semigroup $P_t$ may be extended to $\rmL^1(\rmH,\frm)$.

\medskip

To summarise, we have introduced a Dirichlet structure on $\rmH$ associated to our Gaussian target measure $\frm\in\cP(\rmH)$ which induces a diffusion process $(X_t,\cF_t)_t$ on $\rmH$ (see \cite[Theorem 3.6]{Albeverio1989Classical} or \cite{Schmuland1990}, \cite[Theorem 3.6]{Albeverio1991SDEviaDirichletForms} and \cite{MaRockner1992book}[Chapter IV]). Moreover, if $\rmP$ denotes the law of this process, then $\rmP$-a.e. we have
\be\label{eq:OU:integrata}
 X_t=X_0-\frac12\int_0^tX_s\De s+W^\frm_t\,,
\ee
where $(W^\frm_t)_{t\geq 0}$ is an $(\cF_t)_t$-Brownian motion on $\rmH$ starting at zero with covariance $\langle\cdot,\cdot\rangle_{\cH_\frm}$ (cf. \cite[Theorem 6.10 and Remark 6.8-(ii)]{Albeverio1991SDEviaDirichletForms} or \cite[Theorem II.3.11]{MaRockner1992book}) and for any $t\geq 0$, for any $u\in\rmL^2(\rmH,\frm)$ it holds $P_t u(x)=\bbE[u(X_t)|X_0=x]$ for $\frm$-a.e. $x\in\rmH$. 
Moreover, from Fukushima decomposition \cite[Theorem 4.3 and Remark 4.4-(ii)]{Albeverio1991SDEviaDirichletForms} it follows that $\rmP$ solves the martingale problem $MP(L,\cD(L))$, \ie, that for any $u\in\cD(L)$ the process 
\be\label{eq:MP:fwd}
M^u_t\coloneqq u(X_t)-u(X_0)-\int_0^t Lu(X_s)\De s\quad\forall\,t\geq 0
\ee
is a $\rmP$-martingale.

\medskip

\medskip

Therefore we can build our forward noising dynamics $(\fwdX_t)_{t\geq 0}$ in $\rmH$ that at time $t=0$ starts distributed according to $\data$, which we assume to be absolutely continuous with respect to the Gaussian measure $\frm$, and that evolves according to the OU dynamics~\eqref{eq:OU:integrata}. Now, let us fix a timewindow $T\gg 1$ and let us consider the path space $\Omega=\cC([0,T],\rmH)$. We will denote the law of the forward process with $\fwdP=\cL(\fwdX_{|_{[0,T]}})\in\cP(\Omega)$. For notation consistency, let us denote the forward generator built in this section with $\fwdL$.
Similarly, we can consider as reference measure the law of a diffusion associated to our Dirichlet structure and started at time $t=0$ already distributed according to the Gaussian measure $\frm$. This induces the stationary Markov measure $\rmR\in\cP(\Omega)$, that at any time $t\geq 0$ satisfies $(X_t)_{\#}\rmR=\frm$. Clearly, $\data\ll\frm$ implies $\fwdP\ll \rmR$. 

\begin{remark}\label{remark:smoothing:semigroup}
    In the forthcoming discussion we are going to assume that $\scrH(\data|\frm)$ is finite. This guarantees smoothing effects of the semigroup $(P_t)_{t\geq 0}$. Indeed, if we set $\mu_t=(X_t)_{\#}\fwdP=\data P_t$ and $\rho_t\coloneqq \nicefrac{\De\mu_t}{\De \frm}=P_t\rho_0$, then the former can be seen as the gradient flow on $\cP_2(\rmH)$ of the relative entropy functional \cite[Section 10.4.8]{greenbook} 
    which implies Sobolev regularity for $\log\rho_t$ \cite[Theorem 11.2.12]{greenbook}. More precisely, we may conclude that for a.e. $t>0$ it holds
    $\int\nicefrac{\|\bar\nabla\rho_t\|^2_{\cH_\frm}}{\rho_t}\De\frm<\oo$, \ie, that $\rho_t$ has finite Fisher information (w.r.t.~$\frm$). Notice that this further implies that $\sqrt{\rho_t}\,\in\cD(\cE)$ and $\rho_t\,\in\cD(\cE)$ (via Poincar\'e inequality \cite[Theorem 5.5.1]{Bogachev1998book}).
    Finally, our convergence bounds will further require $\norm{\bar\nabla \sqrt{\rho_0}}_{\cH_\frm}$ to be finite, \ie, that $\sqrt{\rho_0}\in\cD(\cE)$.
\end{remark}


\section{Denoising via abstract time-reversal}\label{sec:back}

In this section we show how the abstract $\Gamma$-calculus framework introduced for the forward dynamics can be used in order to deduce the backward dynamics. More precisely if $\bwdP\coloneqq r^T_{\#}\fwdP\in\cP(\Omega)$ denotes the time-reversed law (under the time reversal $r^T\colon t\mapsto T-t$) then from \cite[Theorem 5.7]{cattiaux2021time} 
we may deduce that $\bwdP$ satisfies (at least on $\PC$) the martingale problem associated to 
\be\label{eq:def:bwdL}
\bwdL_t u\coloneqq \fwdL u+\frac{\Gamma(\rho_{T-t},\,u)}{\rho_{T-t}}\,,
\ee
or equivalently that for any $u\in\PC$ the process
\be\label{eq:MP:back}
\bwdM^u_t\coloneqq u(X_t)-u(X_0)-\int_0^t\bwdL_su(X_s)\De s
\ee
is a local $\bwdP$-martingale. The validity of \cite[Theorem 5.7]{cattiaux2021time} follows from the fact that our noising dynamics is a diffusion (\ie, that we have defined $\Gamma$ in~\eqref{eq:def:Gamma} by imposing the validity of the abstract diffusion property) and that $\PC$ is total in $\rmL^2(\rmH,\frm)$. In the above definition of $\bwdL_t$ the term with the $\Gamma$ operator is defined in a weak sense, \ie, tested against functions $v\in\PC$. Let us further notice here that as soon as $t>0$ the regularising effect of the semigroup yields to $\sqrt{\rho_t}\in \cD(\cE)$ (cf. \Cref{remark:smoothing:semigroup}) and hence the $\Gamma$-score is well defined (this follows from \cite[Lemma 5.3]{cattiaux2021time} combined with the closability of the Malliavin gradient $\bar\nabla$ from $\PC$ to $\rmL^1(\rmH,\frm)$).

\begin{theorem}[$\Gamma$-score identification]\label{thm:gamma-score}
    Assume $\data\ll\frm$. For any $u\in\PC$ and for any $t>0$ we have
    \bes
\frac{\Gamma(\rho_t,\,u)}{\rho_t}(\Phi)=\frac{1}{e^{\nicefrac{t}{2}}-e^{-\nicefrac{t}{2}}}\,\bbE_{\fwdP}[I(\bar\nabla u(X_t))(X_0-e^{-\nicefrac{t}{2}}X_t)|X_t=\Phi]\,.
\ees
\end{theorem}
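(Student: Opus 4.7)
The plan is to work in the weak form: test both sides of the claim against an arbitrary $v\in\PC$ and rely on the density of $\PC$ in $\rmL^2(\rmH,\frm)$ to upgrade the resulting expectation identity to the asserted conditional one. Pairing against $v$ reduces the theorem to showing
\bes
(e^{t/2}-e^{-t/2})\,\bbE_\frm[v\,\Gamma(\rho_t,u)]\,=\,\bbE_\fwdP\bigl[v(X_t)\,I(\bar\nabla u(X_t))(X_0-e^{-t/2}X_t)\bigr].
\ees

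The technical heart of the argument is the following one-variable identity: for a deterministic $h\in\cH_\frm$ and arbitrary $G\in\PC$,
\bes
\bbE_\fwdP[I(h)(X_0)\,G(X_t)]\,=\,e^{t/2}\,\bbE_\fwdP\bigl[G(X_t)\langle\bar\nabla\log\rho_t(X_t),h\rangle_{\cH_\frm}\bigr]+e^{-t/2}\,\bbE_\fwdP\bigl[\langle\bar\nabla G(X_t),h\rangle_{\cH_\frm}\bigr].
\ees
I derive this by the following chain: conditioning on $X_0$ turns the left-hand side into $\bbE_\frm[\rho_0\,I(h)\,P_tG]$ via the Markov property; Malliavin integration by parts rewrites this as $\bbE_\frm[P_tG\,\langle\bar\nabla\rho_0,h\rangle_{\cH_\frm}]+\bbE_\frm[\rho_0\,\langle\bar\nabla P_tG,h\rangle_{\cH_\frm}]$. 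The second summand is handled by the commutation identity $\bar\nabla P_t=e^{-t/2}P_t\bar\nabla$ (cf.~\Cref{lemma:comm:nabla:Pt}) and Markov, producing the factor $e^{-t/2}$ and the conditional expectation. For the first summand, self-adjointness of $P_t$ gives $\bbE_\frm[G\,P_t\langle\bar\nabla\rho_0,h\rangle_{\cH_\frm}]$, and a reverse application of the commutation, $P_t\langle\bar\nabla\rho_0,h\rangle_{\cH_\frm}=\langle P_t\bar\nabla\rho_0,h\rangle_{\cH_\frm}=e^{t/2}\langle\bar\nabla\rho_t,h\rangle_{\cH_\frm}$, reconstructs the score term.

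For the assembly I specialise to $u=F\circ I(\underbar f)$ and apply the key identity with $h=f_i$ and $G_i=v\,(\partial_iF)(I(\underbar f))$, summing over $i$. The first (score) contribution collapses via $\sum_i(\partial_iF)\langle\bar\nabla\log\rho_t,f_i\rangle_{\cH_\frm}=\Gamma(\rho_t,u)/\rho_t$ into $e^{t/2}\bbE_\frm[v\,\Gamma(\rho_t,u)]$; the second, after Leibniz and the relation $\sum_{i,k}(\partial^2_{ik}F)\langle f_i,f_k\rangle_{\cH_\frm}=2Lu+I(\bar\nabla u)$ read off from~\eqref{eq:L:PC}, equals $e^{-t/2}\bbE_\fwdP[(\Gamma(u,v)+2vLu+vI(\bar\nabla u))(X_t)]$. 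Subtracting $e^{-t/2}\bbE_\fwdP[I(\bar\nabla u(X_t))(X_t)\,v(X_t)]=e^{-t/2}\bbE_\fwdP[v\,I(\bar\nabla u)(X_t)]$ to reconstruct the argument $X_0-e^{-t/2}X_t$ cancels the $vI(\bar\nabla u)$ term, and a final application of the Dirichlet-form relation $\bbE_\fwdP[(\Gamma(u,v)+2vLu)(X_t)]=-\bbE_\frm[v\,\Gamma(\rho_t,u)]$ produces exactly the desired factor $(e^{t/2}-e^{-t/2})\bbE_\frm[v\,\Gamma(\rho_t,u)]$.

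The main obstacle is legitimising the Malliavin integration by parts at time zero, since $\rho_0\in\rmL^1(\frm)$ need not lie in the Malliavin Sobolev space. I would bypass this by first establishing the weak identity for $\rho_s$ with $s\in(0,t)$, where \Cref{remark:smoothing:semigroup} guarantees $\sqrt{\rho_s}\in\cD(\cE)$ and finite Fisher information, and then passing $s\downarrow 0$ by dominated convergence on both cylindrical sides of the identity, with the finite-entropy hypothesis $\scrH(\data|\frm)<\infty$ providing the uniform control required for the limit.
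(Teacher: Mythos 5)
Your proposal is correct, and it follows a genuinely different — and in some ways cleaner — route to the same identity. The paper's proof, for $u=I(h)$, works directly on the tested quantity $\bbE_\frm[\Gamma(\rho_t,u)\,v/\rho_t]$: it writes $\rho_t=P_{t-\varepsilon}\rho_\varepsilon$, commutes $\bar\nabla$ past $P_{t-\varepsilon}$, applies Malliavin integration by parts through the divergence operator $\delta$ twice, and ends up with a self-referential relation in which the score term reappears on the right-hand side with a prefactor $e^{-t}$; the factor $(e^{t/2}-e^{-t/2})^{-1}$ then emerges by solving for the unknown. You instead isolate a modular ``key identity'' expressing $\bbE_\fwdP[I(h)(X_0)\,G(X_t)]$ as a weighted sum of a score term and a $\bar\nabla G$ term, derived via the same triple of tools (Markov property, Malliavin integration by parts, commutation with $P_t$), and then obtain the prefactor directly from the Dirichlet-form integration by parts $\bbE_\frm[\rho_t(\Gamma(u,v)+2vLu)]=-\bbE_\frm[v\,\Gamma(\rho_t,u)]$, which you correctly verify via the Leibniz rule $\Gamma(v\rho_t,u)=v\Gamma(\rho_t,u)+\rho_t\Gamma(v,u)$. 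This avoids the self-referential step and makes the source of the prefactor transparent. The assembly for general $u=F\circ I(\underbar f)$ — applying the key identity with $h=f_i$ and $G_i=v\,(\partial_iF)(I(\underbar f))$, summing, and recognising $\sum_{i,j}(\partial^2_{ij}F)\langle f_i,f_j\rangle_{\cH_\frm}=2Lu+I(\bar\nabla u)$ from \eqref{eq:L:PC} — is sound; the paper's generalization step is similar in spirit but applies its scalar formula with test function $v(\partial_iF)\circ I(\underbar f)$ rather than re-deriving. Your regularization at $s>0$ followed by $s\downarrow 0$ plays exactly the role of the paper's $\varepsilon\in(0,t)$ regularization; both are needed because $\rho_0\in\rmL^1(\frm)$ need not be Malliavin differentiable. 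One small caveat: the theorem is stated under $\data\ll\frm$ alone, while your passage to the limit invokes the finite-entropy hypothesis; the paper is in the same position (its smoothing discussion in \Cref{remark:smoothing:semigroup} also relies on finite entropy), so this is not a defect of your argument relative to the paper's, but worth noting if one wishes to prove the theorem under the stated minimal hypothesis.
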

\begin{proof}
    Let us start by assuming that $u=I(h)$ for some $h\in\cH_\frm$.  Then, by fixing $\varepsilon\in(0,t)$ and writing $\rho_t=P_{t-\varepsilon}\rho_\varepsilon$ and by exploiting the commutation relationship between Malliavin derivative and semigroup (see  \Cref{lemma:comm:nabla:Pt} below), for any $v\in\PC$ 
    we know that
    \bes
    \begin{aligned}
\bbE_\frm\biggl[\frac{\Gamma(\rho_t,\,u)}{\rho_t}(X)\,v(X)\biggr]=&\,\bbE_\frm\biggl[\langle\bar\nabla \rho_t (X),\,h\rangle_{\cH_\frm}\,\frac{v(X)}{\rho_t(X)}\biggr]\\
=&\,e^{-\frac{t+\varepsilon}{2}}\,\bbE_\frm\biggl[\langle P_{t-\varepsilon}\bar\nabla\rho_\varepsilon(X),\,h\rangle_{\cH_\frm}\,\frac{v(X)}{\rho_t(X)}\biggr]\\
=&\,e^{-\frac{t+\varepsilon}{2}}\,\bbE_\frm\biggl[\langle\bar\nabla\rho_\varepsilon(X),\,h\rangle_{\cH_\frm}\, P_{t-\varepsilon}\biggl(\frac{v}{\rho_t}\biggr)(X)\biggr]
\end{aligned}\ees
where the last step follows from the fact that $P_{t-\varepsilon}$ is self-adjoint in $\rmL^2(\frm)$. Next, if $\delta$ denotes the divergence operator associated to $\bar\nabla$ (\ie, the $\rmL^2(\frm)$-adjoint of the operator $\bar\nabla$ \cite[Section 1.3]{Nualart2006}), then from \cite[Lemma 1.3.1 and Proposition 1.3.3]{Nualart2006} we have
 \be\label{eq:gamma:into:sum}
    \begin{aligned}
\bbE_\frm&\biggl[\frac{\Gamma(\rho_t,\,u)}{\rho_t}(X)\,v(X)\biggr]
=e^{-\frac{t+\varepsilon}{2}}\,\bbE_\frm\biggl[\langle\bar\nabla\rho_\varepsilon(X),\,h\rangle_{\cH_\frm}\, P_{t-\varepsilon}\biggl(\frac{v}{\rho_t}\biggr)(X)\biggr]\\
=&\,e^{-\frac{t+\varepsilon}{2}}\,\bbE_\frm\biggl[\rho_\varepsilon(X)\,\delta\biggl(h\, P_{t-\varepsilon}\biggl(\frac{v}{\rho_t}\biggr)\biggr)(X)\biggr]\\
=&\,e^{-\frac{t+\varepsilon}{2}}\,\bbE_\frm\biggl[\rho_\varepsilon(X)\,\biggl(P_{t-\varepsilon}\biggl(\frac{v}{\rho_t}\biggr)(X)\delta(h)(X)-\langle\bar\nabla P_{t-\varepsilon}\biggl(\frac{v}{\rho_t}\biggr)(X),\,h\rangle_{\cH_\frm}\biggr)\biggr]\\
=&\,e^{-\frac{t+\varepsilon}{2}}\,\bbE_{\fwdP_\varepsilon}\biggl[P_{t-\varepsilon}\biggl(\frac{v}{\rho_t}\biggr)(X)\, I(h)(X)\biggr]-e^{-\frac{t+\varepsilon}{2}}\,\bbE_\frm\biggl[\rho_\varepsilon(X)\,\langle\bar\nabla P_{t-\varepsilon}\biggl(\frac{v}{\rho_t}\biggr)(X),\,h\rangle_{\cH_\frm}\biggr],
\end{aligned}\ee
where in the last step we have used the fact that $\delta(h)=I(h)$ on the  Cameron-Martin space and that $\rho_\varepsilon=\nicefrac{\De\fwdP_\varepsilon}{\De\frm}$.

Let us focus on the first term that appears in the last display. From the semigroup property, we have
\bes
\begin{aligned}
\bbE_{\fwdP_\varepsilon}\biggl[P_{t-\varepsilon}\biggl(\frac{v}{\rho_t}\biggr)(X)\, I(h)(X)\biggr]=&\,\bbE_{\fwdP_\varepsilon}\biggl[I(h)(X)\,\bbE_{\rmR}\biggl[\frac{v(X_t)}{\rho_t(X_t)}\,\bigg|X_{\varepsilon}=X\biggr]\biggr]\\
=&\,\bbE_{X\sim\fwdP_\varepsilon}\biggl[\bbE_{\rmR}\biggl[I(h)(X_\varepsilon)\,\frac{v(X_t)}{\rho_t(X_t)}\,\bigg|X_{\varepsilon}=X\biggr]\biggr]\\
=&\,\bbE_{\fwdP}\biggl[\bbE_{\fwdP}\biggl[I(h)(X_\varepsilon)\,\frac{v(X_t)}{\rho_t(X_t)}\,\bigg|X_{\varepsilon}\biggr]\biggr]\\
=&\,\bbE_{\fwdP}\biggl[I(h)(X_\varepsilon)\,\frac{v(X_t)}{\rho_t(X_t)}\biggr]\,,
\end{aligned}\ees
where the last steps follow from the fact that $\rmR$ and $\fwdP$ are defined via the same dynamics, hence they differ only for the initial distribution and particularly both share the same conditional probabilities (from the Markovianity of $P_t$ and the construction of $\fwdP$, the law of $X_t$ conditionally to $X_\varepsilon$ under $\rmR$ is the same as the one under $\fwdP$). 
Moreover, in the
vanishing $\varepsilon$ limit we then have
\be\label{eq:lim:eps:first:term}
\begin{aligned}
\lim_{\varepsilon\downarrow 0}\bbE_{\fwdP}\biggl[I(h)(X_\varepsilon)\,\frac{v(X_t)}{\rho_t(X_t)}\biggr]&\,=\bbE_{\fwdP}\biggl[I(h)(X_0)\,\frac{v(X_t)}{\rho_t(X_t)}\biggr]\\
=&\,\bbE_{X\sim\fwdP_t}\biggl[\frac{v(X)}{\rho_t(X)}\,\bbE_{\fwdP}[I(h)(X_0)|X_t=X]\biggr]\\
=&\,\bbE_\frm[v(X)\,\bbE_{\fwdP}[I(h)(X_0)|X_t=X]]\,.
\end{aligned}
\ee
To be more precise, in order to consider the vanishing $\varepsilon$ limit above, we proceed as follows. Firstly, we rely on the properties of the divergence operator $\delta$ (cf. \cite[Lemma 1.3.1 and Proposition 1.3.3]{Nualart2006}) and write
\bes
\begin{aligned}
\bbE_{\fwdP}\biggl[I(h)(X_\varepsilon)\,\frac{v(X_t)}{\rho_t(X_t)}\biggr]=&\,\bbE_{X\sim\frm}\biggl[\rho_\varepsilon(X)\, I(h)(X)\,\bbE_{\rmR}\biggl[\frac{v(X_t)}{\rho_t(X_t)}\bigg|X_\varepsilon=X\biggr]\biggr]\\
=&\,\bbE_{\frm}\biggl[\langle h,\,\bar\nabla\biggl(\rho_\varepsilon
\,P_{t-\varepsilon}\biggl(\frac{v}{\rho_t}\biggr)\biggr)\rangle_{\cH_\frm}\biggr]\\
=&\,\bbE_{\frm}\biggl[\rho_\varepsilon\langle h,\,\bar\nabla
P_{t-\varepsilon}\biggl(\frac{v}{\rho_t}\biggr)\rangle_{\cH_\frm}\biggr]+\bbE_{\frm}\biggl[\langle h,\,\bar\nabla \rho_\varepsilon
\rangle_{\cH_\frm}\,P_{t-\varepsilon}\biggl(\frac{v}{\rho_t}\biggr)\biggr]\,.
\end{aligned}
\ees
Next, we can apply the Dominated Convergence Theorem  since the commutation estimate \Cref{lemma:comm:nabla:Pt} guarantees that 
\bes
\begin{aligned}
\bbE_{\frm}\biggl[\rho_\varepsilon \abs{\langle h,\,\bar\nabla
P_{t-\varepsilon}\biggl(\frac{v}{\rho_t}\biggr)\rangle_{\cH_\frm}}\biggr]=&\,e^{-\frac{t-\varepsilon}{2}}\bbE_{\frm}\biggl[\rho_t\abs{\langle h,\,\bar\nabla
\biggl(\frac{v}{\rho_t}\biggr)\rangle_{\cH_{\frm}}}\biggr]\\
\leq&\, \bbE_{\fwdP_t}\biggl[\abs{\langle h,\,\bar\nabla v\rangle_{\cH_\frm}}\biggr]+\bbE_{\frm}\biggl[\frac{\abs{\Gamma(\rho_t,\,u}}{\rho_t}\biggr]
\end{aligned}
\ees
 and that
\bes
\begin{aligned}
    \bbE_{\frm}\biggl[\abs{\langle h,\,\bar\nabla \rho_\varepsilon
\rangle_{\cH_\frm}\,P_{t-\varepsilon}\biggl(\frac{v}{\rho_t}\biggr)}\biggr]=e^{\frac{t-\varepsilon}{2}} \bbE_{\frm}\biggl[\abs{\langle h,\,\frac{\bar\nabla \rho_t}{\rho_t}\rangle_{\cH_\frm}\,v}\biggr]\leq e^{\frac{t}{2}}\bbE_{\frm}\biggl[\frac{\abs{\Gamma(\rho_t,\,u)}}{\rho_t}\,|v|\biggr]\,,
\end{aligned}
\ees
 which are both uniformly bounded in $\varepsilon>0$. Therefore, from the Dominated Convergence Theorem and again from the properties of the divergence operator $\delta$ (cf. \cite[Lemma 1.3.1 and Proposition 1.3.3]{Nualart2006}) we may conclude that 
 \bes
\begin{aligned}
\lim_{\varepsilon\downarrow 0}\bbE_{\fwdP}\biggl[I(h)(X_\varepsilon)\,\frac{v(X_t)}{\rho_t(X_t)}\biggr]
=\bbE_{\frm}\biggl[\langle h,\,\bar\nabla\biggl(\rho_0
\,P_{t}\biggl(\frac{v}{\rho_t}\biggr)\biggr)\rangle_{\cH_\frm}\biggr]
=\bbE_{\fwdP}\biggl[I(h)(X_0)\,\frac{v(X_t)}{\rho_t(X_t)}\biggr]\,,
\end{aligned}
\ees
and hence the validity of~\eqref{eq:lim:eps:first:term}.

Let us now focus on the last term appearing in \eqref{eq:gamma:into:sum}. By relying again on the commutation estimate \Cref{lemma:comm:nabla:Pt}, the self-adjointness of the semigroup and on the properties of the divergence operator $\delta$ (cf. \cite[Lemma 1.3.1 and Proposition 1.3.3]{Nualart2006}), as already done before, we have
\bes
\begin{aligned}
\bbE_\frm\biggl[\rho_\varepsilon(X)\,\langle\bar\nabla P_{t-\varepsilon}\biggl(\frac{v}{\rho_t}\biggr)(X),\,h\rangle_{\cH_\frm}\biggr]=e^{-\frac{t+\varepsilon}{2}}\,\bbE_\frm\biggl[P_{t-\varepsilon}\rho_\varepsilon(X)\,\langle\bar\nabla \biggl(\frac{v}{\rho_t}\biggr)(X),\,h\rangle_{\cH_\frm}\biggr]\\
=e^{-\frac{t+\varepsilon}{2}}\,\bbE_\frm\biggl[\rho_t(X)\,\langle\bar\nabla \biggl(\frac{v}{\rho_t}\biggr)(X),\,h\rangle_{\cH_\frm}\biggr]=e^{-\frac{t+\varepsilon}{2}}\,\bbE_\frm\biggl[\frac{v(X)}{\rho_t(X)}\,\delta(h\,\rho_t(X))\biggr]\\
=e^{-\frac{t+\varepsilon}{2}}\,\bbE_\frm[v(X)\,I(h)(X)]-e^{-\frac{t+\varepsilon}{2}}\,\bbE_\frm\biggl[\frac{v(X)}{\rho_t(X)}\,\langle\bar\nabla \rho_t(X),\,h\rangle_{\cH_\frm}\biggr]\\
=e^{-\frac{t+\varepsilon}{2}}\,\bbE_\frm[v(X)\,I(h)(X)]-e^{-\frac{t+\varepsilon}{2}}\,\bbE_\frm\biggl[\frac{\Gamma(\rho_t,\,u)}{\rho_t}(X)\,v(X)\biggr]\,.
\end{aligned}
\ees

Putting together this last display with \eqref{eq:gamma:into:sum}, \eqref{eq:lim:eps:first:term} and letting $\varepsilon\downarrow 0$ we finally deduce that
\bes
\begin{aligned}
\bbE_\frm\biggl[\frac{\Gamma(\rho_t,\,u)}{\rho_t}(X)\,v(X)\biggr]=&\,e^{-\nicefrac{t}{2}}\,\bbE_\frm[v(X)\,\bbE_{\fwdP}[I(h)(X_0)|X_t=X]]\\
&-e^{-t}\bbE_\frm[v(X)\,I(h)(X)]
+e^{-t}\,\bbE_\frm\biggl[\frac{\Gamma(\rho_t,\,u)}{\rho_t}(X)\,v(X)\biggr],
\end{aligned}
\ees
and hence that
\be\label{eq:formula:gamma:monomi:semplici}
\bbE_\frm\biggl[\frac{\Gamma(\rho_t,\,u)}{\rho_t}(X)\,v(X)\biggr]=\frac{1}{e^{\nicefrac{t}{2}}-e^{-\nicefrac{t}{2}}}\,\bbE_\frm\biggl[v(X)\,\bbE_{\fwdP}[I(h)(X_0-e^{-\nicefrac{t}{2}}X_t)|X_t=X]\biggr]\,.
\ee

\medskip

Finally, if we consider a general $u=F\circ I(\underbar{f})\in\PC$, from \eqref{eq:gradient:mall:PC} we may deduce that 
\bes
\begin{aligned}
\bbE_\frm&\biggl[\frac{\Gamma(\rho_t,\,u)}{\rho_t}(X)\,v(X)\biggr]=\bbE_\frm\biggl[\langle\bar\nabla \rho_t (X),\,\bar\nabla u(X)\rangle_{\cH_\frm}\,\frac{v(X)}{\rho_t(X)}\biggr]\\
&=\bbE_\frm\biggl[\langle\bar\nabla \rho_t (X),\,\sum_{i=1}^k (\partial_i F)(I(f_1),\dots,I(f_k))(X)\,f_i\rangle_{\cH_\frm}\,\frac{v(X)}{\rho_t(X)}\biggr]\\
&=\sum_{i=1}^k\,\bbE_\frm\biggl[\langle\bar\nabla \rho_t (X),\,f_i\rangle_{\cH_\frm}\,\frac{v(X)\,(\partial_i F)\circ I(\underbar{f})(X)}{\rho_t(X)}\biggr]\\
&=\frac{1}{e^{\nicefrac{t}{2}}-e^{-\nicefrac{t}{2}}}\,\sum_{i=1}^k\,\bbE_\frm\biggl[v(X)\,\bbE_{\fwdP}[(\partial_i F)\circ I(\underbar{f})(X_t)\,I(f_i)(X_0-e^{-\nicefrac{t}{2}}X_t)|X_t=X]\biggr]\,,
\end{aligned}\ees
where the last step follows from the previous discussion by noticing that $v(X)\,(\partial_i F)\circ I(\underbar{f})(X)\in\PC$ and hence the validity of~\eqref{eq:formula:gamma:monomi:semplici} with this modified test function. Recalling again \eqref{eq:gradient:mall:PC} we finally get
\bes
\bbE_\frm\biggl[\frac{\Gamma(\rho_t,\,u)}{\rho_t}(X)\,v(X)\biggr]=\frac{1}{e^{\nicefrac{t}{2}}-e^{-\nicefrac{t}{2}}}\bbE_\frm\biggl[v(X)\bbE_{\fwdP}[I(\bar\nabla u(X_t))(X_0-e^{-\nicefrac{t}{2}}X_t)|X_t=X]\biggr].
\ees

Since $v\in\PC$ is arbitrary and $\PC$ is dense in $\rmL^2(\rmH,\frm)$ our thesis follows.
\end{proof}

\Cref{thm:gamma-score}  connects the abstract time-reversal formula~\eqref{eq:def:bwdL} and the $\Gamma$-score with the conditional expectation of $X_0$ (conditioned to $\fwdX_t$), which in the finite-dimensional setting is known to correspond to the relative score functional (notice that our result applies also to the finite-dimensional case $\rmH=\bbRD$).  As we will see in the next section, one of the reason why the score function can be efficiently learned is due to this link with conditional expectations (and henceforth with $\rmL^2$-projections).

\begin{remark}
The previous proof highlights the importance of having a commutation relation between the semigroup and the Malliavin gradient. Indeed, this is the only step in the proof where we have explicitly relied on the OU nature of the forward dynamics. This suggests that the approach presented here might be generalized to different noising (diffusion) dynamics, provided a commutation relation of the form $\bar\nabla P_t=f(t)P_t\bar\nabla$ for a general $f(t)\in(0,1)$. Particularly, this further highlights the fact that the score equals a conditional expectation of $X_0$ (conditioned to $\fwdX_t$) provided the commutation between Malliavin gradient and semigroup, and henceforth its validity might be extended beyond the OU dynamics.  
\end{remark}

For sake of completeness, below we state the commutation relation between Malliavin gradient and semigroup employed in the previous proof.

\begin{lemma}[Lemma 1.4.2 in \cite{Nualart2006}]\label{lemma:comm:nabla:Pt}
  For any $u\in\rmL^1(\rmH,\frm)$ and any $t\geq 0$ we have
    \bes
\bar\nabla P_t\,u=e^{-\nicefrac{t}{2}}P_t\bar\nabla u\,.
    \ees
\end{lemma}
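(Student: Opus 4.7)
The plan is to prove the identity first on the dense algebra $\PC$ of cylindrical functions and then extend it to $\rmL^1(\rmH,\frm)$ by closability and density.

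On $\PC$, the natural route is Mehler's formula (already invoked in the paper to extend $P_t$ to $\rmL^1$), which represents the semigroup as
\[
P_t u(\varphi) = \int_\rmH u\bigl(e^{-\nicefrac{t}{2}}\varphi + \sqrt{1-e^{-t}}\,\psi\bigr)\,\De\frm(\psi).
\]
For $u = F\circ I(\underbar{f}) \in \PC$, I would differentiate under the integral sign. The chain rule~\eqref{eq:gradient:mall:PC} makes this transparent: the Malliavin derivative is the $\cH_\frm$-gradient with respect to the first argument of the integrand, and the dilation $\varphi \mapsto e^{-\nicefrac{t}{2}}\varphi$ pulls out a factor $e^{-\nicefrac{t}{2}}$. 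Hence
\[
\bar\nabla P_t u(\varphi) = e^{-\nicefrac{t}{2}}\int_\rmH (\bar\nabla u)\bigl(e^{-\nicefrac{t}{2}}\varphi + \sqrt{1-e^{-t}}\,\psi\bigr)\,\De\frm(\psi) = e^{-\nicefrac{t}{2}}\, P_t(\bar\nabla u)(\varphi),
\]
where the second equality is Mehler's formula applied componentwise to the $\cH_\frm$-valued function $\bar\nabla u$.

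As an independent sanity check, the identity can be read off from the Wiener chaos decomposition~\eqref{eq:with:chaos}. It is a classical fact that $\bar\nabla$ maps the $n$-th chaos $\cH_n$ into $\cH_{n-1}\otimes \cH_\frm$. Since $P_t$ acts on $\cH_n$ as multiplication by $e^{-\nicefrac{nt}{2}}$ and on $\cH_{n-1}\otimes \cH_\frm$ as multiplication by $e^{-\nicefrac{(n-1)t}{2}}$, a termwise comparison of the chaos expansions of $\bar\nabla P_t u$ and $P_t \bar\nabla u$ produces exactly the factor $e^{-\nicefrac{t}{2}}$.

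Finally, the extension from $\PC$ to $\rmL^1(\rmH,\frm)$ relies on the closability of $\bar\nabla$ from $\rmL^1(\rmH,\frm)$ to $\rmL^1(\rmH;\cH_\frm)$ (recalled in the paper after~\eqref{eq:gradient:mall:PC}), combined with the density of $\PC$ in $\rmL^1(\rmH,\frm)$ and the $\rmL^1$-contractivity of $P_t$ granted by Mehler's formula. The main (mild) obstacle is interpreting $\bar\nabla u$ when $u$ is merely integrable: this is handled either by regarding the right-hand side as a Bochner-type integral of an $\cH_\frm$-valued map against the Mehler kernel, or distributionally by testing against functions $v\in\PC$, where the previous computation combined with the closedness of $\bar\nabla$ closes the argument.
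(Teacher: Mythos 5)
Your proof is correct, and the extension step from $\PC$ to $\rmL^1(\rmH,\frm)$ (closability of $\bar\nabla$, density of $\PC$, $\rmL^1$-contractivity of $P_t$ via Mehler) is exactly the one the paper uses. On the core identity over $\PC$, however, your primary argument via Mehler's formula is a genuinely different route from the paper's: the paper simply cites the Wiener chaos representation~\eqref{eq:with:chaos} together with Nualart's Lemma 1.4.2 and declares the commutation ``trivial,'' which is precisely your ``sanity check'' argument (using that $\bar\nabla$ lowers the chaos order by one, so $\bar\nabla$ sees $e^{-nt/2}$ while $P_t\bar\nabla$ sees $e^{-(n-1)t/2}$, producing the factor $e^{-t/2}$). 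Your Mehler route has the advantage of being self-contained and not requiring the chaos-shift property of the derivative: differentiating under the Mehler integral, the perturbation $\varphi\mapsto\varphi+\epsilon h$ enters the integrand as $e^{-t/2}\epsilon h$ in the Cameron--Martin direction, and the factor falls out immediately by the chain rule. The paper's route is shorter on paper only because it delegates to Nualart. One small point worth making explicit in the Mehler argument: differentiating under the integral is justified for $u\in\PC$ because $\bar\nabla u$ has polynomial growth, so the $\cH_\frm$-valued integrand is dominated uniformly for $\varphi$ in bounded sets; you gesture at this but it is the one place a reader might pause.
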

\begin{proof}
 If $u\in\PC$ the thesis trivially follows from the explicit expressions~\eqref{eq:with:chaos} combined with \cite[Lemma 1.4.2]{Nualart2006}. The extension to $\rmL^1(\rmH,\mu)$ follows from the density of $\PC\subseteq\rmL^1(\rmH,\frm)$ and the closability of $\bar\nabla$ \cite{NourdinPeccati2012book}, combined with the fact that $(P_t)_{t\geq 0}$ is a linear contraction operator on $\rmL^1(\rmH,\mu)$ \cite[Proposition 2.8.6]{NourdinPeccati2012book}. 
\end{proof}

\medskip

As a corollary of the above identification we may then deduce that the law $\bwdP$ of the time-reversal process weakly satisfies in $[0,T]$ a time-inhomogeneus SDE. We will prove this result under an extra regularity assumption for $\data$, namely a finite entropy condition.

\begin{corollary}\label{cor:back:SDE}
    Assume that $\scrH(\data|\frm)<\oo$. 
    Then the time reversal law $\bwdP$ weakly satisfies on the time interval $[0,T]$ the following SDE
    \be\label{eq:SDE:bwd}
\begin{cases}
    \De \bwdX_s=-\frac12\,\bwdX_s\De s+\sigma_{T-s}^{-1}\,\bbE_{\fwdP}[\fwdX_0-e^{\nicefrac{T-s}{2}}\fwdX_{T-s}|\fwdX_{T-s}=\bwdX_s]\,\De s+\De W^{\cH_\frm}_s\\
    \bwdX_0\sim\fwdP_T\,,
\end{cases}
\ee
where $\sigma_s=e^{\nicefrac{s}{2}}-e^{-\nicefrac{s}{2}}$.
\end{corollary}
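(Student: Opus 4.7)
The plan is to read the drift and diffusion of the SDE~\eqref{eq:SDE:bwd} directly off the expression of the backward generator $\bwdL_s$ acting on cylindrical test functions, and then to verify via Itô's formula that~\eqref{eq:SDE:bwd} realises the martingale problem~\eqref{eq:MP:back}. The finite-entropy hypothesis $\scrH(\data|\frm)<\oo$ is exactly what allowed the author to invoke \cite{cattiaux2021time} to get~\eqref{eq:MP:back} and~\eqref{eq:def:bwdL} in the first place, and via \Cref{remark:smoothing:semigroup} it guarantees that $\sqrt{\rho_t}\in\cD(\cE)$ for all $t>0$, so the $\Gamma$-score appearing in $\bwdL_s$ is well defined on the relevant time interval.

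The first step is to substitute \Cref{thm:gamma-score} (at time $t=T-s$) into~\eqref{eq:def:bwdL} to obtain, for every $u\in\PC$,
$$\bwdL_s u(\Phi)\,=\,\fwdL u(\Phi)\,+\,\sigma_{T-s}^{-1}\,\bbE_{\fwdP}\bigl[I(\bar\nabla u(X_{T-s}))(X_0-e^{-(T-s)/2}X_{T-s})\,\big|\,X_{T-s}=\Phi\bigr].$$
The forward generator, as written in~\eqref{eq:L:PC}, already splits as $\fwdL u=-\tfrac12\,I(\bar\nabla u)+\tfrac12\sum_{i,j}(\partial^2_{ij}F)(I(\underbar{f}))\langle f_i,f_j\rangle_{\cH_\frm}$, where the first piece is the infinitesimal action of the drift $-\tfrac12\bwdX_s$ and the second piece is the Itô correction produced by a noise with covariance $\langle\cdot,\cdot\rangle_{\cH_\frm}$, matching the cylindrical Brownian motion $W^{\cH_\frm}$ introduced in~\eqref{eq:OU:integrata}. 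By duality between $\cH_\frm$ and the Paley--Wiener integrals $I(h)$, the $\Gamma$-score term reads as the pairing of $\bar\nabla u(\Phi)$ with the $\cH_\frm$-valued drift $\sigma_{T-s}^{-1}\bbE_{\fwdP}[X_0-e^{-(T-s)/2}X_{T-s}\,|\,X_{T-s}=\Phi]$, which is precisely the extra drift appearing in~\eqref{eq:SDE:bwd}.

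The second step is to apply Itô's formula to $u\in\PC$ evaluated along a weak solution of~\eqref{eq:SDE:bwd} started from $\bwdX_0\sim\fwdP_T$: the decomposition one obtains coincides with $u(\bwdX_s)-u(\bwdX_0)-\int_0^s\bwdL_r u(\bwdX_r)\,\De r$ plus a local-martingale stochastic integral, so the martingale problem~\eqref{eq:MP:back} is satisfied on the core $\PC$, and this identifies the law of the solution with $\bwdP$. The main obstacle is the rigorous handling of the conditional-expectation drift near $s=T$, where the prefactor $\sigma_{T-s}^{-1}$ diverges as $(T-s)\downarrow 0$. This is where the assumption $\scrH(\data|\frm)<\oo$ intervenes a second time: via the gradient-flow/Fisher-information regularisation recalled in \Cref{remark:smoothing:semigroup}, the $\cH_\frm$-norm of the numerator vanishes at a matching rate, yielding the local integrability of the drift on $[0,T]$ that is needed for the weak formulation of~\eqref{eq:SDE:bwd} to make sense.
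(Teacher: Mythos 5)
The proposal misses the central technical step of the paper's proof: establishing \emph{uniqueness} of the martingale problem~\eqref{eq:MP:back}. You write that applying It\^o's formula to a weak solution of~\eqref{eq:SDE:bwd} yields the martingale problem, and that ``this identifies the law of the solution with $\bwdP$.'' But that last inference is precisely what requires well-posedness of the MP: \cite[Theorem 5.7]{cattiaux2021time} tells you that $\bwdP$ \emph{is} a solution, not that it is the \emph{only} one. Without uniqueness, two distinct laws could both solve~\eqref{eq:MP:back}, and the fact that a weak SDE solution gives one such law says nothing about whether it coincides with $\bwdP$.

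The paper closes this gap by an argument you do not replicate: given any finite-entropy solution $\rmQ$ of the MP~\eqref{eq:MP:back}, project onto a finite-dimensional marginal $J=\{\ell_1,\dots,\ell_N\}$ via $\mathrm{proj}_J$, use \Cref{thm:gamma-score} together with the inclusion of $\sigma$-algebras $\{\fwdX_{T-s}=X_s\}\subseteq\{\fwdX^J_{T-s}=X^J_s\}$ (tower property) to show that $\rmQ^J$ solves the \emph{finite-dimensional} martingale problem~\eqref{eq:bMP:finite}, invoke the finite-dimensional time-reversal uniqueness from \cite{cattiaux2021time} to conclude $\rmQ^J=\bwdP^J$, and finally assemble all marginals via the Kolmogorov Extension Theorem to get $\rmQ=\bwdP$. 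This projection-and-glue argument is the real content of the corollary; ``reading off the SDE from the generator'' and It\^o's formula only establish the implication (weak SDE solution)$\,\Rightarrow\,$(MP solution), which is the easy direction and is in fact what the paper relegates to a one-line remark at the very end of its proof.

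Two further, more minor, points. First, the ``duality'' step in which you identify the $\Gamma$-score term as a pairing of $\bar\nabla u(\Phi)\in\cH_\frm$ with an ``$\cH_\frm$-valued drift'' is imprecise: the conditional expectation $\bbE_{\fwdP}[X_0-e^{-(T-s)/2}X_{T-s}\mid X_{T-s}=\Phi]$ lives in $\rmH$, not in $\cH_\frm$, and the Paley--Wiener integral $I(\bar\nabla u(\Phi))(\cdot)$ is the functional that pairs an $\cH_\frm$-element against an $\rmH$-element; this distinction matters when making the SDE meaning of the drift rigorous in the infinite-dimensional setting. Second, your appeal to the blow-up of $\sigma_{T-s}^{-1}$ near $s=T$ being compensated ``at a matching rate'' by the Fisher-information regularisation of \Cref{remark:smoothing:semigroup} is stated but not argued; the remark gives finiteness of the Fisher information of $\rho_t$ for a.e.\ $t>0$, which by itself does not produce a quantitative rate as $t\downarrow 0$. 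In the paper's proof this integrability issue is handled implicitly at the finite-dimensional level by citing \cite{cattiaux2021time}, so you would need a separate argument if you wish to bypass the projection step.
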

\begin{proof}
    Firstly let us notice that the martingale problem \eqref{eq:MP:back} on the test functions $\PC$ with $\bwdL_t$ defined via \eqref{eq:def:bwdL}, with starting marginal $\bwdP_0=\fwdP_T$, admits a unique solution with finite entropy w.r.t.~$\rmR$ (and hence it uniquely characterizes $\bwdP$).
        In view of that, for any fixed $N$ and finite collection $J=\{\ell_1,\dots,\ell_N\}$, let $\mathrm{proj}_J$ denote  the projection on the subspace generated by the eigenfunctions of $C$ with indices in $J$, \ie
    \bes
\mathrm{proj}_J\colon\rmH\to\rset^N\,,\quad \Phi\mapsto\Phi^J\coloneqq (\Phi^{\ell_j}\coloneqq\langle \Phi,\,Y_{\ell_j}\rangle_\rmH)_{j=1,\dots,N}\,,
    \ees
    and for any path measure $\rmQ\in\cP(\Omega)$, with starting marginal $\rmQ_0=\bwdP_0=\fwdP_T$, solving \eqref{eq:MP:back}, consider its pushforward $\rmQ^J\coloneqq(\mathrm{proj}_J)_{\#}\rmQ\in\cP(\cC([0,T],\rset^N))$. 

    \medskip
    
    \noindent \textbf{Claim: $\rmQ^J=\bwdP^J$.} Let us start by showing that $\rmQ^J$ is a solution to the (finite-dimensional) martingale problem associated to the generator $\bwdL_s^{J}$ defined on any test function $u\in\cC^\infty_\mathrm{b}(\rset^N)$ as
\be\label{eq:bMP:finite}
 \bwdL_s^{J} u(x)=\fwdL^{J} u(x)+\sigma_{T-s}^{-1}\sum_{j=1}^N \partial_{\ell_j} u (x)\,\bbE_{\fwdP^{J}}[X_0^{\ell_j}-e^{-\nicefrac{T-s}{2}}X_{T-s}^{\ell_j}|X_{T-s}^{J}=x],
\ee
where $\fwdL^{J}$ is the generator of the $N$-dimensional Ornstein-Uhlenbeck process 
\be\label{eq:OU:marginalization:forward}
\De X_t=-\frac12 X_t\,\De t+ \Sigma^{J}\,\De B_t,\,\quad\text{ with }\Sigma^{J}\coloneqq\mathrm{diag}(C_{\ell_j}^{\nicefrac12})_{j=1,\dots,N}\,.
\ee
whose law $\fwdP^{J}=(\mathrm{proj}_J)_{\#}\fwdP$  coincides with the finite-dimensional marginalisation of $\fwdP$ over the indices in $J$. To see this, introduce the coordinate variables $\phi^{\ell_j}\coloneqq\langle\phi,Y_{\ell_j}\rangle_{\rmH}=I(C_{\ell_j}Y_{\ell_j})(\phi)$ and note that any smooth bounded test function $u\in\cC^\infty_{b}(\rset^N)$  can be written as a cylindrical test function $u^{\mathrm{ext}}(\phi)\coloneqq u(\phi^{\ell_1},\dots , \phi^{\ell_N}\rangle_\rmH)=u\circ I((C_{\ell_j}Y_{\ell_j})_{j=1,\dots,N})(\phi)$ so that from \eqref{eq:def:bwdL}, \eqref{eq:MP:back} and \Cref{thm:gamma-score} it follows
\bes
\begin{aligned}
\bbE_{\rmQ^J}[u(X_t)-u(X_0)]=\bbE_\rmQ[u^{\mathrm{ext}}(X_t)-u^{\mathrm{ext}}(X_0)]\\
\overset{\eqref{eq:MP:back}}{=}
\int_0^t\sigma_{T-s}^{-1}\,\bbE_\rmQ \biggl[\bbE_{\fwdP}[I(\bar\nabla u^{\mathrm{ext}}(\fwdX_s))(\fwdX_0-e^{-\nicefrac{T-s}{2}}\fwdX_{T-s})|\fwdX_{T-s}=X_s]\biggr]\De s\\
+\int_0^t \bbE_{\rmQ}\fwdL u^{\mathrm{ext}}(X_s)\De s\\
\overset{\eqref{eq:L:PC}}{=}\int_0^t\sigma_{T-s}^{-1}
\bbE_\rmQ \biggl[\nabla u(X^J_s)\cdot\bbE_{\fwdP}[\fwdX^J_0-e^{\nicefrac{T-s}{2}}\fwdX_{T-s}^J|\fwdX_{T-s}=X_s]\biggr]\De s\\
+\int_0^t\bbE_\rmQ\biggl[-\frac12\nabla u(X^J_s)\cdot X^J_s+\frac12\sum_{j=1}^NC_{\ell_j}\partial^2_{jj}u(X_s^J)\biggr]\De s\\
=\int_0^t\sigma_{T-s}^{-1}
\bbE_\rmQ \biggl[\nabla u(X^J_s)\cdot\bbE_{\fwdP^J}[\fwdX^J_0-e^{\nicefrac{T-s}{2}}\fwdX_{T-s}^J|\fwdX_{T-s}^J=X_s^J]\biggr]\De s\\
+\int_0^t\bbE_{\rmQ} \fwdL^J u(X^J_s)\De s\,,
\end{aligned}\ees
where the last step follows from the law of total expectations and the tower property since we have the following inclusion of events $\{\fwdX_{T-s}=X_s\}\subseteq\{\fwdX_{T-s}^J=X_s^J\}$. Since under the law $\rmQ$ we have $X^J\sim\rmQ^J$ we may conclude that 
\bes
\bbE_{\rmQ^J}[u(X_t)-u(X_0)]=
\int_0^t \bbE_{\rmQ^J}\bwdL^J u(X_s)\De s\,,
\ees
and that $\rmQ^J$ solves the (finite-dimensional) martingale problem associated to the generator defined in \eqref{eq:bMP:finite}. 
This is enough for proving our claim since 
 since $\scrH(\rmQ^{J}|\rmR^{J})\leq \scrH(\rmQ|\rmR)<\oo$, and this uniquely characterizes $\rmQ^{J}$ as the time reversal of the OU process on $\rset^N$ started in $(\mathrm{proj}_J)_{\#}\data$ \cite[Theorem 5.7]{cattiaux2021time}, and hence that $\rmQ^{J}=\bwdP^{J}$.

 \medskip
 
 \noindent \textbf{Conclusion.} We have shown that $\rmQ^{J}=\bwdP^{J}$, or equivalently that  $(r^T_{\#}\rmQ)^{J}=r^T_{\#}\rmQ^{J}=\fwdP^{J}$.  Since $N\in\N$ and the collection of eigenfunctions $(Y_{\ell_j})_{j=1,\dots,N}$ were arbitrary, by the Kolmogorov Extension Theorem we may conclude that $r^T_{\#}\rmQ=\fwdP$,
and hence that the MP~\eqref{eq:MP:back} uniquely characterizes $\rmQ=\bwdP$. Finally, it is enough noticing that any weak solution of the backward SDE \eqref{eq:SDE:bwd} satisfies the martingale problem \eqref{eq:MP:back} (owing to the explicit expression for the $\Gamma$-score given in \Cref{thm:gamma-score}).

\end{proof}

The previous result is particularly useful in the implementation of the generative model since it gives us a way of simulating $\data$ as the time evolution of the SDE \eqref{eq:SDE:bwd}, provided that we are able to compute (an approximate) version of the conditional expectation appearing in the drift term and provided that we have access to samples from $\fwdP_T$ (or better, from a probability measure close to it, e.g. from the Gaussian $\frm$ which is close enough for $T\gg 1$, see~\eqref{eq:LSI:conv:KL} below). We refer to the next section for a more detailed analysis of the sampling scheme.


\section{$\Gamma$-Score approximation and sampling scheme}\label{sec:scheme}

From the backward SDE~\eqref{eq:SDE:bwd} we see that the $\Gamma$-score functional contribution comes from the drift term 
\bes
\bbE_{\fwdP}[\fwdX_0-e^{\nicefrac{T-s}{2}}\fwdX_{T-s}|\fwdX_{T-s}=\Phi]=\bbE_{\fwdP}[\fwdX_0|\fwdX_{T-s}=\Phi]-e^{\nicefrac{T-s}{2}}\Phi\,.
\ees
Therefore the core of any generative model (based on our OU noising forward dynamics) boils down to learning from noising forward dynamics the conditional expectation function
$\bbE_{\fwdP}[\fwdX_0|\fwdX_{T-s}=\Phi]$.
Owing to the characterization of conditional expectations as $\rmL^2$-projections \cite[Corollary 8.17]{klenke},
we know that for any $\sigma(\fwdX_{T-s})$-measurable random field $Y\in\rmL^2(\fwdP_{T-s})$ 
\bes
\bbE[\|\fwdX_0-Y\|_{\rmH}^2]\leq \bbE[\|\fwdX_0-\bbE[\fwdX_0|\fwdX_{T-s}]\|_{\rmH}^2]\,,
\ees
and equality is attained at $Y=\bbE[\fwdX_0|\fwdX_{T-s}]$. Therefore, given a parametric family of smooth transformations $\{(s,\phi)\mapsto \mathrm{v}_s^\theta(\phi)\}_{\{\theta\in\Theta\}}$ (generally via neural networks) with values in $\rmH$ we may approximate the $\Gamma$-score by minimizing
\bes
\min_{\theta\in\Theta}\bbE [\|\fwdX_0-\mathrm{v}_{T-s}^\theta(\fwdX_{T-s})\|_\rmH^2 ]\,.
\ees
Above we consider a minimization problem over $\Theta$ for each fixed time $s$. In practice we fix  a sequence of times $\{0=t_0<t_1<\dots<t_M=T\}$ (e.g., uniformly sampled over $[0,T]$) and minimize the above expression integrated in time via the discretization and by replacing the expectation with the empirical mean computed on the forward dynamics of our data samples
\be\label{eq:likelihood:scores}
\min_{\theta\in\Theta}\,\frac1N\sum_{i=1}^N\sum_{j=1}^M\frac{(t_j-t_{j{-1}})}{T}\,\|\fwdX^{(i)}_0-\mathrm{v}_{T-t_j}^\theta(\fwdX^{(i)}_{T-t_j})\|_\rmH^2 \,,
\ee
where $(\fwdX_0^{(i)})_{1\leq i\leq N}\overset{\mathrm{iid}}{\sim}\data$ are our original data samples whereas $(\fwdX_s^{(i)})_{1\leq i\leq N}$ correspond to their (independent) time evolutions induced by our Dirichlet structure (\ie, according to \eqref{eq:OU:integrata}).

The training algorithm can be summarized as follows

\medskip

\begin{algorithm}[H]
   \caption{Training}\label{alg:training}
\LinesNumbered
\KwIn{$N$ samples $X^{(i)}\sim\data$ from our data distribution, time horizon $T\gg1$ large enough, partition $0=t_0<t_1<\cdots<t_M=T$ (usually uniformly at random or equidistributed), parametrized vector field family $\{\mathrm{v}^{\theta}:\theta\in\Theta\}$}
\For{$i=1,\dots, N$}{
\tcc{Forward OU evolution  }
Initialize $\fwdX_0^{(i)}=X^{(i)}$\\
Sample the forward OU process $(\fwdX_{t_{j}}^{(i)})_{j=1,\dots,N}$ using \eqref{eq:OU:integrata}
}
Learn $\theta^\star$ optimizer in \eqref{eq:likelihood:scores}\\
\KwOut{$\mathrm{v}^{\theta^\star}$}
\end{algorithm}

\medskip

Once the approximate score $\mathrm{v}^{\theta^\star}$ is learned, on the time-grid $(T-t_j)_{j=0,\dots,M}$, we can (approximately) sample from our data distribution. For simplicity and exposition's sake we consider hereafter $(t_i)_{i=1,\dots, M}$ equidistributed, \ie, with constant stepsize $h=\nicefrac{T}{M}$ (but the generalisation is straightforward). Since $\bwdP_T=\data$ \Cref{cor:back:SDE} guarantees that we can sample from $\data$ by simulating \eqref{eq:SDE:bwd} with Euler-Maruyama method. However, since the $\Gamma$-score is unknown and we do not have access to $\fwdP_T$, we will assume that $\fwdP_T$ is close to the Gaussian reference measure $\frm$ (which eventually holds for $T\gg1$) and we will replace  the $\Gamma$-score with the approximate score $\mathrm{v}^{\theta^\star}$. In conclusion we will simulate the approximate SDE
   \be\label{eq:SDE:approx}
\begin{cases}
    \De Y_s=-\frac12\,Y_s\De s+\sigma_{T-t_j}^{-1}\,(\mathrm{v}^{\theta^\star}_{T-t_j}(Y_{t_j})-e^{\nicefrac{T-t_j}{2}}Y_{t_j})\De s+\De W^{\cH_\frm}_s \quad \forall\,s\in[t_j,t_{j+1}]\\
    Y_0\sim\frm\,,
\end{cases}
\ee

\medskip

\begin{algorithm}[H]
   \caption{Sampling from ($\theta^\star$-approximate) backward evolution}\label{alg:sampling}
\LinesNumbered
\KwIn{$\mathrm{v}^{\theta^\star}$ approximate score, $h$ step size, $T$ time window}
Sample a Gaussian random variable $Y^{\mathrm{EM}}_0\sim\frm=\mathcal{N}(0,C)$\\
\For{$j=1,\dots, M=T/h$}{
\tcc{Euler-Maruyama for approximate backward SDE \eqref{eq:SDE:approx}}
set $\sigma_{j-1}=e^{\frac{T-t_{j-1}}{2}}-e^{-\frac{T-t_{j-1}}{2}}$\\
sample $\xi^j\sim\frm=\mathcal{N}(0,C)$ \\
set $Y^{\mathrm{EM}}_{t_j}\coloneqq Y^{\mathrm{EM}}_{t_{j-1}}-\frac{h}{2}\,Y^{\mathrm{EM}}_{t_{j-1}}+\frac{h}{\sigma_{{j-1}}}\biggl(\mathrm{v}^{\theta^\star}_{T-t_{j-1}}(Y^{\mathrm{EM}}_{t_{j-1}})-e^{\nicefrac{T-t_j}{2}}Y^{\mathrm{EM}}_{t_{j-1}}\biggr)+\sqrt{h}\,\xi^j$
}
\KwOut{$Y^{\mathrm{EM}}_T$}
\end{algorithm}


\subsection{Generative model for spherical random fields}\label{sec:example}
In this section we specify our infinite-dimensional generative model to the spherical random fields case, \ie, when the state space corresponds to the Hilbert space $\rmH=\LLS$. For a comprehensive introduction to spherical random fields we refer the interested reader to the monograph \cite{marinucci2011random}. For sake of completeness we introduce here the objects that will appear below. A spherical random field can be seen as a random element $T\in\LLS$. A nice characteristic is that it can be decomposed in a spectral decomposition (also known as the  Karhunen-Lo\`eve decomposition, which holds almost surely thanks to the deterministic Peter-Weyl Theorem) as
\bes
T(x)=\sum_{\ell=0}\sum_{m=-\ell}^\ell a_{\ell,m} Y_{\ell,m}(x)
\ees
where the $a_{\ell,m}$'s are random coefficients whereas  the deterministic functions $Y_{\ell,m}\in\LLS$ are called spherical harmonics and form an orthonormal basis for $\LLS$ of eigenfunctions of the spherical Laplacian, \ie, that
\bes
\Delta_\sS Y_{\ell,m}=-\ell(\ell+1)Y_{\ell,m}\,. 
\ees
This harmonics are explicitly known and can be expressed in terms of Legendre polynomials. As a consequence, all the randomness of the field is encoded in the sequence
 $(a_{\ell,m})\in\ell^2(\N)$ and we have
\bes
\|T\|_{\rmL^2(\Omega;\LLS)}^2=\bbE\|T\|^2_\LLS=\sum_{\ell=0}^\infty \sum_{m=-\ell}^\ell \bbE |a_{\ell,m}|^2=\|(a_{\ell,m})_{\ell,m}\|_{\ell^2}^2\,.
\ees
Moreover, the coefficients $a_{\ell,m}$ can be computed by inverting the Karhunen-Lo\'eve decomposition as 
\bes
a_{\ell,m}=\int_{\sS}T(x)Y_{\ell,m}(x)\De \vol\,.
\ees

A particular role is played by isotropic random fields  (\ie, the fields whose law is invariant under the action of $SO(3)$, the  group of rotations in $\rset^3$), since under this assumption we further know that
\bes
\bbE[a_{\ell,m}a_{\ell',m'}]=C_\ell\delta^\ell_{\ell'}\delta^m_{m'}
\ees
and the sequence $(C_\ell)_\ell$ is known as angular power spectrum of $T$.
 Finally, if the field is $2$-weakly isotropic, then the spectral decomposition equality holds in the strong $\rmL^2(\Omega;\LLS)$ sense, \ie, that
\bes
\lim_{L\to\infty}\bbE\int_{\sS}\norm{T(x)-\sum_{l=0}^L\sum_{m=-l}^l a_{l,m}Y_{l,m}(x)}^2\De\vol=0\,.
\ees

\bigskip

Therefore our noising dynamics will take place in $\LLS$ and the process that we will consider will be a space-time random field.
As target noise measure we consider the law of a Whittle–Mat\'ern field, which can be formally defined as the solution of the SPDE
\bes
(\kappa^2-\Delta_\sS)^\beta\, u=\cW\,,
\ees
where $\beta,\kappa>0$ are the smoothness and correlation parameters, whereas $\cW$ is a spherical Gaussian white noise, \ie, a generalized  centered Gaussian random field defined on test functions $\phi,\psi\in\rmL^2(\sS)$ as
\bes
\cov{(\cW,\phi)_{\rmL^2(\sS)}}{(\cW,\psi)_{\rmL^2(\sS)}}=(\phi,\psi)_\LLS\,.
\ees
Formally $\cW$ can be seen as the spherical field with angular power spectrum $C^\cW_\ell=1$ for any $\ell\in\N$, and with spectral coefficients $a^\cW_{\ell,m}\sim\cN(0,1)$ (whose associated series does not converge in $\LLS$). From this, the Whittle-Mat\'ern field can be formally computed as follows:
\bes
\begin{aligned}
u=[(\kappa^2-\Delta_{\sS})^\beta]^{-1}\cW=\sum_{\ell=0}^\infty\sum_{m=-\ell}^\ell a^\cW_{\ell,m} [(\kappa^2-\Delta_{\sS})^\beta]^{-1}Y_{\ell,m}\\
=\sum_{\ell=0}^\infty\sum_{m=-\ell}^\ell a^\cW_{\ell,m}(\kappa^2+\ell(\ell+1))^{-\beta} Y_{\ell,m}\,.
\end{aligned}
\ees
Hence the Whittle-Mat\'ern field is  the Gaussian random field associated to the random coefficients $a^{\kappa,\beta}_{\ell,m}\sim\cN(0,(\kappa^2+\ell(\ell+1))^{-2\beta})$ and with angular power spectrum
\be\label{eq:ang:power:spectrum}
C^{\kappa,\beta}_\ell=(\kappa^2+\ell(\ell+1))^{-2\beta}\,.
\ee
For $\beta>\nicefrac12$ the Karhunen-Lo\`eve expansion converges in $\rmL^2(\Omega;\LLS)$ and 
\bes
\|u\|^2_{\rmL^2(\Omega;\LLS)}=\bbE\|u\|^2_\LLS=\sum_{\ell=0}^\infty (2l+1) C_\ell^{\kappa,\beta}\leq \frac{\kappa^{2(1-2\beta)}}{2\beta-1}<+\infty\,,
\ees
whereas the case $\beta=\nicefrac12$ corresponds to the Gaussian Free Field. For this reason we restrict ourselves to the case $\beta>\nicefrac12$ so that the Mat\'ern field is a random variable taking values in $\LLS$ and hence its law is a probability measure on $\LLS$.

\medskip

Let us take as target noise measure $\frm^{\kappa,\beta}$ the law on $\LLS$ of this random field and consider the associated measure space $(\LLS,\frm^{\kappa,\beta})$. Moreover, in view of~\eqref{eq:ang:power:spectrum} we know that the covariance operator $C^{\kappa,\beta}$ associated to $\frm^{\kappa,\beta}$ is of trace class, the spherical harmonics $(Y_{l,m})_{l\geq0}^{m=-\ell,\dots,+\ell}$ form the corresponding an orthonormal basis of $\rmH=\LLS$ and $C^{\kappa,\beta}Y_{\ell,m}=C_\ell^{\kappa,\beta}Y_{\ell,m}$. Then, the Cameron-Martin space associated to $\frm^{\kappa,\beta}$ corresponds to the space
\bes
\cH_{\frm^{\kappa,\beta}}\coloneqq\biggl\{u=\sum_{\ell,m}a_{\ell,m}Y_{\ell,m}\in\LLS\eqsp\text{ s.t. }\eqsp \sum_{\ell\geq 0}\sum_{m=-\ell}^\ell  a_{\ell,m}^2 (\kappa^2+\ell(\ell+1))^{2\beta}<+\infty\biggr\}\,,
\ees
equipped with the scalar product that for any $u=\sum_{\ell,m}a_{\ell,m}^uY_{\ell,m}$ and $v=\sum_{\ell,m}a_{\ell,m}^vY_{\ell,m}$ in $\cH_{\frm^{\kappa,\beta}}$ reads as
\bes
\langle u,v\rangle_{\cH_{\frm^{\kappa,\beta} }}\coloneqq\sum_{\ell\geq 0}\sum_{m=-\ell}^\ell  a_{\ell,m}^u\,a_{\ell,m}^v (\kappa^2+\ell(\ell+1))^{2\beta},.
\ees

\medskip

Then, the finite-dimensional marginalization up to the frequency level $L>0$  corresponds to the vector of random coefficients $(a_{\ell,m})_{\ell\leq L}^{m=-\ell,\dots,+\ell}$. This means that the forward noising dynamics considered in~\eqref{eq:OU:marginalization:forward} corresponds to the OU dynamics for each coefficient $a_{\ell,m}$ independently of each different channel, \ie, these coefficients evolve independently according to
\bes
\begin{cases}
\De \fwdalm(s)=-\frac12\,\fwdalm(s)+(\kappa^2+\ell(\ell+1))^{-\beta}\,\De B_s\\
\fwdalm(0)=\langle u_{\mathrm{data}},Y_{\ell,m}\rangle_{\LLS}\,,
\end{cases}
\ees
where $u_{\mathrm{data}}\sim\data$ is a sample from our data distribution.

Once the forward dynamics for the $\fwdalm$ has been simulated, we may train a neural network as in \Cref{alg:training} in order to learn an approximate score function $\mathrm{v}^{\theta^\star}$ and then generate the new data samples $ Y^{\mathrm{gen}}_T$ where
\bes
 Y^{\mathrm{gen}}_s\coloneqq \sum_{\ell,m} g_{\ell,m}(s) Y_{\ell,m}\,,
\ees
and where the random coefficients $g_{\ell,m}(t)$ are proxies for $\bwdalm(t)\sim\fwdalm(T-t)$ and are sampled (as in \Cref{alg:sampling}) according to~\eqref{eq:SDE:approx} that for any  $s\in[t_j,t_{j+1}]$ reads as
   \bes
\begin{cases}
    \begin{aligned}\De g_{\ell,m}(s)=-\frac12\,g_{\ell,m}(s)\De s+\sigma_{T-t_j}^{-1}\,(\mathrm{v}^{\theta^\star}_{T-t_j}( Y^{\mathrm{gen}}_{t_j})-e^{\nicefrac{T-t_j}{2}}g_{\ell,m}(t_j))\De s\\
    +(\kappa^2+\ell(\ell+1))^{-\beta}\,\De B_s\end{aligned}\\
    \text{and it starts with }g_{\ell,m}(0)\sim\cN(0,(\kappa^2+\ell(\ell+1))^{-2\beta}) \,.
\end{cases}
\ees
Therefore, once the score is learned, we generate new random fields with law (close to) $\data$ via simulating the above SDE for the $a_{\ell,m}$ coefficients and the SDE is started in the Gaussian centred variables $\cN(0,(\kappa^2+\ell(\ell+1))^{-2\beta})$, that correspond to the coefficients of a Whittle-Mat\'ern field.

\subsubsection{Numerical simulation} 
\begin{figure}[h]
    \centering
    \begin{subfigure}{0.3\textwidth}
        \includegraphics[width=\linewidth]{snapshots/data_original.png}
    \end{subfigure}\hfill
    \begin{subfigure}{0.3\textwidth}
        \includegraphics[width=\linewidth]{snapshots/03_36.png}
    \end{subfigure}\hfill
    \begin{subfigure}{0.3\textwidth}
        \includegraphics[width=\linewidth]{snapshots/04_59.png}
    \end{subfigure}
    
     \begin{subfigure}{0.3\textwidth}
        \includegraphics[width=\linewidth]{snapshots/06_17.png}
    \end{subfigure}\hfill
    \begin{subfigure}{0.3\textwidth}
        \includegraphics[width=\linewidth]{snapshots/07_47.png}
    \end{subfigure}\hfill
    \begin{subfigure}{0.3\textwidth}
        \includegraphics[width=\linewidth]{snapshots/08_59.png}
    \end{subfigure}
    
    \begin{subfigure}{0.3\textwidth}
        \includegraphics[width=\linewidth]{snapshots/09_84.png}
    \end{subfigure}\hfill
    \begin{subfigure}{0.3\textwidth}
        \includegraphics[width=\linewidth]{snapshots/20_00.png}
    \end{subfigure}\hfill
    \begin{subfigure}{0.3\textwidth}
        \includegraphics[width=\linewidth]{snapshots/Matern_field_noise.png}
    \end{subfigure}
    
    \caption{Simulation of the forward noising process at different time-steps.}
    \label{fig:noising}
\end{figure}
In \Cref{fig:noising} we simulate the forward noising dynamics described in our spherical random fields example. In this example, we have considered an ERA5 map from the European Centre for Medium-Range Weather Forecasts \cite{hersbach_era5_2018, copernicus_era5_2023}  plotting the air temperature at 2 meters above the surface of the Earth on the 01/01/2023 (in Kelvin). In order to fit our framework we have firstly turned this ERA5 map into an Healpy map (top left corner in \Cref{fig:noising}), in order to manipulate it with the Python package Healpy useful for spherical random fields simulations, which is based on the HEALPix\footnote{\url{http://healpix.sourceforge.net}} C++ library \cite{Zonca2019, 2005ApJ...622..759G}.  Then, we have computed the (real) harmonic coefficients $(a_{\ell,m})_{\ell,m}$. To be more precise, we have firstly applied the function \emph{healpy.sphtfunc.map2alm}, with $L_{\max}=128$, which returns the complex harmonic coefficients associated to a given Healpy map and then from the latters we have computed the real ones. 

Then, we have applied the noising dynamics  (OU evolution) to the vector of the (real) harmonic coefficients $(a_{\ell,m})_{\ell,m}$ as described in the previous section, with  target noise equal to a Whittle-Mat\'ern field with $\kappa=8$ and $\beta=0.65$ (bottom right corner in \Cref{fig:noising}). For the simulation of the OU process we have explicitly computed the time evolution in $10^3$ time-steps equidistributed in $[0,1]$ and in $10^3$ steps equidistributed in $[1,20]$.

Finally, from the OU evolution of the $(a_{\ell,m})_{\ell,m}$ we have computed the corresponding evolution for the complex harmonic coefficients and built the time-evolution of the Healpy maps using the function \emph{healpy.sphtfunc.alm2map}. The plot has been obtained using \emph{healpy.visufunc.mollview}.

We have stopped our process at $T_{\max}=20$ since at this stage our empirical angular power spectrum $(\hat{C}_\ell)_{\ell\in[0,128]}$ (\ie, the one computed by averaging over $m$ the harmonic coefficients $(a_{\ell,m})_{m=-\ell,\dots,0,\dots,+\ell}$) fits the theoretical angular power spectrum of the  Whittle-Mat\'ern field given at~\eqref{eq:ang:power:spectrum}. In \Cref{fig:empirical:Cl} and \Cref{fig:averaged:Cl} we plot the theoretical angular power spectrum, the empirical one, and an empirical power spectrum where we average over the last $30$ time-steps of the OU evolution (to compensate the error in the low frequencies of $\hat{C}_\ell$, due to the lack of enough harmonic coefficients to average out for low values of $\ell$).  Lastly, in \Cref{fig:delta} we plot the ratio between the empirical (and averaged empirical) $\hat{C}_\ell$ and the theoretical $C_\ell$.

\begin{figure}[h]\label{fig:empirical:Cl}
    \centering
    \begin{subfigure}{0.3\textwidth}
        \includegraphics[width=\linewidth]{cl_plots/plot_cl1.png}
    \end{subfigure}\hfill
    \begin{subfigure}{0.3\textwidth}
        \includegraphics[width=\linewidth]{cl_plots/plot_cl2.png}
    \end{subfigure}\hfill
    \begin{subfigure}{0.3\textwidth}
        \includegraphics[width=\linewidth]{cl_plots/plot_cl3.png}
    \end{subfigure}
    \caption{ Plot of empirical and theoretical power spectra}
\end{figure}

\begin{figure}[h]\label{fig:averaged:Cl}
    \centering
    \begin{subfigure}{0.3\textwidth}
        \includegraphics[width=\linewidth]{cl_plots/averaged_1.png}
    \end{subfigure}\qquad  \qquad
    \begin{subfigure}{0.3\textwidth}
        \includegraphics[width=\linewidth]{cl_plots/averaged_2.png}
    \end{subfigure}
    \caption{Plot of averaged (over last 30 steps), empirical and theoretical power spectra}
\end{figure}

\begin{figure}[h]\label{fig:delta}
    \centering
    \begin{subfigure}{0.3\textwidth}
        \includegraphics[width=\linewidth]{cl_plots/plot_delta.png}
    \end{subfigure}\qquad  \qquad
    \begin{subfigure}{0.3\textwidth}
        \includegraphics[width=\linewidth]{cl_plots/averaged_delta.png}
    \end{subfigure}
    \caption{Ratio between empirical $\hat{C}_\ell$ and theoretical $C_\ell$ (on the left) and ratio between averaged empirical $\hat{C}_\ell$ and theoretical $C_\ell$ (on the right)}
\end{figure}

\medskip

Given the above preprocessing of the ERA5 maps into Healpy maps and real vector of harmonic coefficients, the training algorithm described in \Cref{alg:training} can be obtained by adapting any standard architecture already employed in the literature for vector-valued diffusion models, taking into account the OU dynamics we have choosen. Once this is done, \Cref{alg:sampling} is straightforward. Implementing the generative model was out of the scope of the current paper, which, on the other hand, investigates the theoretical point of view of infinite-dimensional-generative models and analyses theoretical convergence bounds (cf.~\Cref{sec:convergence_bounds}). 

The purpose of the current section is merely visualizing the noising forward dynamics of the harmonic coefficients in terms of spherical fields, in order to help the target readership (unfamiliar with generative models) in understanding what happens along the noising algorithm. 
In view of that, let us conclude by mentioning that the noising forward procedure (which takes places along \Cref{alg:training}) should be thought as following \Cref{fig:noising} from the top-left corner towards the bottom-right corner, whereas the denoising procedure (\ie, the sampling algorithm \Cref{alg:sampling}) can be seen as following \Cref{fig:noising} from the bottom-right corner  towards the top-left corner.


\section{Convergence results for finite-dimensional approximations}\label{sec:convergence_bounds}
In this section we provide a $\scrH$-convergence bound for the generative model, when considering a fixed finite-dimensional marginalization of our random field. The following is a direct adaptation of \cite{Conforti2025KLConvergence} to our setting. For this reason, we will omit details and just sketch how their result adapts/applies to ours. Our goal is bounding the $\scrH$-divergence between (finite-dimensional marginalizations of) $\data$ and the distribution generated via~\eqref{eq:SDE:approx}, \ie, the law of $Y_T$. Below we won't take into account the error due to the application of Euler-Maruyama scheme in \Cref{alg:sampling}, which can be taken into account via standard results, and we will focus on the propagation of two errors involving the process $Y_\cdot$ (which should mimic the backward process $\bwdX_\cdot$):
\begin{itemize}
    \item a first error is due to the fact that $Y_0\sim\frm$ is sampled from the Gaussian measure, however $\bwdX_0\sim\bwdP_0=\fwdP_T$ which is not the Gaussian,
    \item a second one due to the fact that we have access to an approximated version of the score $\mathrm{v}^{\theta^\star}$ and moreover it is defined on the time grid $(T-t_k)_{k=0,\dots,M}$ and not on the whole time interval $[0,T]$.
\end{itemize}

In order to do that, we will rely on Girsanov's theory \cite{Leonard2012Girsanov} for finite-dimensional diffusions and on the projections considered in the proof of \Cref{cor:back:SDE}. Hence, fix $N\in\N$, a finite collection $J=\{\ell_1,\dots,\ell_N\}$ and consider the finite-dimensional backward process $\bwdX^J_\cdot$ weakly solving (cf.~proof of \Cref{cor:back:SDE})
\bes
\begin{cases}
    \De \bwdX_s^J=-\frac{1}{2}\,\bwdX^J_s\De s+\sigma_{T-s}^{-1}\,\biggl(\bbE_{\fwdP^J}[\fwdX^J_0|\fwdX^J_{T-s}=\bwdX^J_s]-e^{\nicefrac{T-s}{2}}\bwdX_s^J\biggr)\De s+\Sigma^J\,\De B_s\\
    \bwdX_0^J\sim\fwdP^J_T\,.
\end{cases}
\ees
By mimicking the scheme introduced in the previous section we can then take $\mathrm{v}^{\theta^\star,J}_s$ as the minimizer in \eqref{eq:likelihood:scores} (this time evaluated for the finite-dimensional forward process $\fwdX_\cdot^J$) and consider the corresponding approximate backward process
\bes
\begin{cases}
    \De Y_s^J=-\frac{1}{2}\,Y_s^J\De s+\sigma_{T-t_k}^{-1}\,(\mathrm{v}^{\theta^\star,J}_{T-t_k}(Y_{t_k}^J)-e^{\nicefrac{T-t_k}{2}}Y^J_{t_k})\De s+\Sigma^J\,\De B_s \quad \forall\,s\in[t_k,t_{k+1}]\\
    Y_0^J\sim\frm^J=\mathcal{N}(0,(\Sigma^J)^2)\,,
\end{cases}
\ees
with $\Sigma^J\coloneqq\mathrm{diag}(C_{\ell_j}^{\nicefrac12})_{j=1,\dots,N}$. Finally we are going to assume that our approximate score $\mathrm{v}^{\theta^\star,J}$ is good enough, that is
\begin{assumption}\label{ass:score:goodness}
    there exists $\varepsilon>0$ and $\theta^\star\in\Theta$ such that
    \bes
\frac1M\,\sum_{j=1}^M\,\bbE\norm{\bbE[\fwdX^J_0|\fwdX^J_{T-t_j}=\bwdX^J_{t_j}]-\mathrm{v}_{T-t_j}^{\theta^\star,J}(\bwdX^{J}_{t_j})}_{(\Sigma^J)^{-1}}^2 \leq \varepsilon^2\,,
    \ees
    where the norm induced by $\Sigma^J$ is defined for any $x\in\rset^N$ as $\norm{x}_{(\Sigma^J)^{-1}}\coloneqq \norm{(\Sigma^J)^{-1}\,x}$.
\end{assumption}

Assumptions like this one (with the usual Euclidean norm) are commonly considered in the literature of score-based generative models \cite{lee2022convergence,pmlr-v201-lee23a,Conforti2025KLConvergence,strasman2025an}. The presence of the weighted norm $\norm{\cdot}_{(\Sigma^J)^{-1}}$ is due to the fact that in the limit the actual norm that plays a role in the convergence bound is the norm induced by the Cameron-Martin space. This means that any attempt of extending the result presented here to the law of the whole random fields should replace \Cref{ass:score:goodness} with
 \bes
\frac1M\,\sum_{j=1}^M\,\bbE\norm{\bbE[\fwdX_0|\fwdX_{T-t_j}=\bwdX_{t_j}]-\mathrm{v}_{T-t_j}^{\theta^\star}(\bwdX_{t_j})}_{\cH_\frm}^2 \leq \varepsilon^2\,.
    \ees

\medskip

We are now ready to prove our $\scrH$-convergence bound.

\begin{theorem}\label{thm:KL:convergence:bound}
     Assume $\scrH(\data|\frm)$ and $I(\data|\frm)$ to be finite, fix a finite-dimensional marginalization $J=\{\ell_1,\dots,\ell_N\}$, a time-discretization step $h>0$ and assume \Cref{ass:score:goodness}. Then we have 
     \bes
\scrH(\data^J|\cL(Y_T^J))
    \leq e^{-\nicefrac{T}{2}}\,\scrH(\data|\frm)+T\,\varepsilon^2+2\,h\,\max\{4,h\} \,\bbE_{\frm}\norm{\bar\nabla\sqrt{\rho_0}}^2_{\cH_\frm}\,.
\ees
\end{theorem}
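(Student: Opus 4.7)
The plan is to work at the level of the finite-dimensional marginalization $\bwdX^J$ on $\cC([0,T],\rset^N)$, whose SDE was derived in the proof of \Cref{cor:back:SDE}, and to adapt slab-by-slab the entropic strategy of \cite{Conforti2025KLConvergence} to this setting. Since $Y^J$ is driven by the same diffusion matrix $\Sigma^J$, a Girsanov comparison is available in the weighted norm $\|\cdot\|_{(\Sigma^J)^{-1}}$ already appearing in \Cref{ass:score:goodness}, which is exactly the restriction of the Cameron--Martin norm to the indices in $J$.

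First I would combine the data-processing inequality along the evaluation map $\omega\mapsto\omega_T$ with the chain-rule additivity of the relative entropy on $\cC([0,T],\rset^N)$ to obtain
\begin{equation*}
\scrH(\data^J|\cL(Y_T^J))\;\leq\;\scrH(\bwdP^J|\cL(Y^J))\;=\;\scrH(\fwdP_T^J|\frm^J)\;+\;\bbE_{\bwdP^J}\bigl[\scrH(\bwdP^J_{|X_0}|\cL(Y^J)_{|X_0})\bigr].
\end{equation*}
The first summand is bounded by $\scrH(\fwdP_T|\frm)\leq e^{-T/2}\scrH(\data|\frm)$, using data processing under further projection and the exponential contraction of relative entropy along the OU semigroup coming from the logarithmic Sobolev inequality for $\frm$. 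Girsanov's theorem applied conditionally on the initial point then rewrites the second summand as
\begin{equation*}
\bbE_{\bwdP^J}\bigl[\scrH(\bwdP^J_{|X_0}|\cL(Y^J)_{|X_0})\bigr]\;=\;\frac{1}{2}\sum_{k=0}^{M-1}\int_{t_k}^{t_{k+1}}\bbE_{\bwdP^J}\bigl\|b_s^{\mathrm{tr}}(\bwdX^J_s)-b_{t_k}^{\mathrm{ap}}(\bwdX^J_{t_k})\bigr\|_{(\Sigma^J)^{-1}}^2\De s,
\end{equation*}
with $b_s^{\mathrm{tr}}$ and $b_{t_k}^{\mathrm{ap}}$ the drifts from \eqref{eq:SDE:bwd} and \eqref{eq:SDE:approx}. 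Using $(a+b)^2\leq 2a^2+2b^2$, I would split the integrand into a pure score-approximation error, whose time average is bounded by $\varepsilon^2$ via \Cref{ass:score:goodness} and delivers the $T\varepsilon^2$ contribution, and a time-discretization error.

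The technical heart is to bound each slab of the discretization error
\begin{equation*}
\int_{t_k}^{t_{k+1}}\bbE_{\bwdP^J}\bigl\|\phi_s(\bwdX^J_s)-\phi_{t_k}(\bwdX^J_{t_k})\bigr\|_{(\Sigma^J)^{-1}}^2\De s,\quad \phi_s(x)\coloneqq \sigma_{T-s}^{-1}\bigl(\bbE_{\fwdP^J}[\fwdX_0^J|\fwdX_{T-s}^J=x]-e^{\nicefrac{T-s}{2}}x\bigr),
\end{equation*}
by a multiple of $h^2\,I(\data|\frm)$, plus an $h$-independent correction accounting for the $\max\{4,h\}$ factor. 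Following \cite{Conforti2025KLConvergence}, I would apply It\^o's formula to $s\mapsto\phi_s(\bwdX^J_s)$ along the backward dynamics and control the resulting quadratic-variation and drift-correction terms by the Malliavin--Fisher functional $\bbE_{\fwdP}\|\bar\nabla\sqrt{\rho_{T-s}}\|_{\cH_\frm}^2$. Here the commutation identity \Cref{lemma:comm:nabla:Pt} together with the chaos expression \eqref{eq:with:chaos} is the key ingredient, as it trades time-derivatives of conditional expectations for spatial Malliavin derivatives and makes the Cameron--Martin norm appear naturally. The de Bruijn identity (dissipation of Fisher information along the OU flow) then gives $\bbE_{\fwdP}\|\bar\nabla\sqrt{\rho_{T-s}}\|_{\cH_\frm}^2\leq I(\data|\frm)$ uniformly in $s\in[0,T]$, so that summing the $M=T/h$ slab estimates produces the $h\max\{4,h\}\,I(\data|\frm)$ contribution. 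The main obstacle is precisely this last step: keeping the constants in the It\^o/commutation estimate independent of the marginalization $J$, which is exactly what the $\Gamma$-calculus framework of \Cref{sec:forward} is designed to make transparent by identifying the Cameron--Martin norm as the natural one.
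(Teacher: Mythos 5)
Your decomposition is the same as the paper's: data processing to pass from $Y_T^J$ to the path measures, Girsanov in the $(\Sigma^J)^{-1}$-weighted norm, a log-Sobolev contraction for the initialization error $\scrH(\fwdP^J_T|\frm^J)\le e^{-T/2}\scrH(\data|\frm)$, the split $\|a+b\|^2\le 2\|a\|^2+2\|b\|^2$ isolating the $T\varepsilon^2$ score-approximation term, and the recognition that the Cameron--Martin norm is the natural one for the dimension-free constants. That part matches the paper.

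The gap is in the discretization error. You propose to bound each slab $\int_{t_k}^{t_{k+1}}\bbE\|\phi_s(\bwdX^J_s)-\phi_{t_k}(\bwdX^J_{t_k})\|^2_{(\Sigma^J)^{-1}}\De s$ by a multiple of $h^2\,I(\data|\frm)$ using the \emph{uniform} bound $\bbE\|\bar\nabla\sqrt{\rho_{T-s}}\|^2_{\cH_\frm}\le I(\data|\frm)$ from de Bruijn/Fisher-information dissipation, and then sum over $M=T/h$ slabs. But that sum gives $M\cdot h^2\,I(\data|\frm)=T\,h\,I(\data|\frm)$, i.e.\ an extra unwanted factor $T$. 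The paper avoids this precisely by \emph{not} taking a uniform bound: from the It\^o dynamics for $V^J_t(\bwdX^J_t)$ and for $\|V^J_t(\bwdX^J_t)\|^2_{(\Sigma^J)^{-1}}$ (the content of \Cref{lemma:gio}, not \Cref{lemma:comm:nabla:Pt}, which is used for the score identification only), the drift of the squared-norm process equals $\|V^J_t\|^2_{(\Sigma^J)^{-1}}+\|\nabla V^J_t\|^2_{\mathrm{Fr}}\ge 0$, so $t\mapsto\bbE_{\bwdP^J}\|V^J_t(\bwdX^J_t)\|^2_{(\Sigma^J)^{-1}}$ is nondecreasing and each slab bound is $h\max\{2,\nicefrac h2\}\bigl(\bbE\|V^J_{t_{k+1}}\|^2-\bbE\|V^J_{t_k}\|^2\bigr)$. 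These increments \emph{telescope}, leaving a single terminal term $h\max\{2,\nicefrac h2\}\,\bbE\|V^J_T(\bwdX^J_T)\|^2$, which by the time-reversal identity (no de Bruijn needed) is $\bbE_{\fwdP^J}\|\Sigma^J\nabla\log\rho^J_0\|^2$ and is then pushed to $4\,\bbE_\frm\|\bar\nabla\sqrt{\rho_0}\|^2_{\cH_\frm}$ via Jensen on the conditional density plus Bessel in $\cH_\frm$. Without this submartingale/telescoping observation your bound does not close with the claimed $h\max\{4,h\}$ factor; you would need to replace the uniform-in-$s$ estimate with the telescoping identity for the argument to go through.
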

\begin{proof}
 Since $\data^J=\cL(\bwdX_T^J)$, from \cite[Theorem 2.3]{Leonard2012Girsanov} and from the data-processing inequality \cite[Lemma 1.6]{Marcel:notes}, we immediately deduce that
 
 \be\label{eq:bound:3:errori}\begin{aligned}
   \,& \scrH(\data^J|\cL(Y_T^J))
    \leq\,\scrH(\cL(\bwdX^J_\cdot)|\cL(Y^J_\cdot))\\
    =&\,\scrH(\cL(\bwdP_0^J)|\cL(Y^J_0))\\
 &\quad+\frac12\sum_{k=0}^{M-1}\int_{t_k}^{t_{k+1}} \bbE_{\bwdP^J}\norm{V^J_s(\bwdX^J_s)-\sigma_{T-t_k}^{-1}(\mathrm{v}^{\theta^\star,J}_{T-t_k}(\bwdX^J_{t_k})-e^{\nicefrac{T-t_k}{2}}\bwdX_{t_k}^J)}_{(\Sigma^J)^{-1}}^2\De s\\
    \leq&\,\scrH(\fwdP_T^J|\frm^J)\\
    &\quad+\sum_{k=0}^{M-1}\int_{t_k}^{t_{k+1}} \bbE_{\bwdP^J}\norm{V^J_{t_k}(\bwdX^J_{t_k})-\sigma_{T-t_k}^{-1}(\mathrm{v}^{\theta^\star,J}_{T-t_k}(\bwdX^J_{t_k})-e^{\nicefrac{T-t_k}{2}}\bwdX_{t_k}^J)}_{(\Sigma^J)^{-1}}^2\De s\\
    &\quad +\sum_{k=0}^{M-1}\int_{t_k}^{t_{k+1}} \bbE_{\bwdP^J}\norm{V^J_s(\bwdX^J_s)-V^J_{t_k}(\bwdX^J_{t_k})}_{(\Sigma^J)^{-1}}^2\De s\,,
    \end{aligned}\ee
    where for notations' sake we have introduced the (finite-dimensional relative) score function (see for instance \cite[Lemma 1]{Pidstrigach2024Infinite}) 
    \bes
    V_s^J(x)\coloneqq \sigma_{T-s}^{-1}\biggl(\bbE_{\fwdP^J}[\fwdX^J_0|\fwdX^J_{T-s}=x]-e^{\nicefrac{T-s}{2}}x\biggr)=C^J\nabla\log\rho^J_{T-s}(x)\,,
    \ees
    with $C^J=(\Sigma^J)^2=(C_{\ell_j})_{j=1,\dots,N}$ and $\rho^J_s\coloneqq \frac{\De \fwdP^J_s}{\De \frm^J}$ where $\frm^J$ is the $N$-dimensional centered Gaussian measure with covariance $(\Sigma^J)^2$ (\ie, $\rho^J$ is the density w.r.t. the equilibrium of the law of $\fwdX^J$).

    The first term in the last line of~\eqref{eq:bound:3:errori} can be bounded via Log-Sobolev inequality \cite[Theorem 5.5.1]{Bogachev1998book} (see also \cite[Theorem 5.2.1 and Proposition 5.2.2]{bakry2013analysis}). Indeed, the former implies for any $t\in[0,T]$
    \bes
\scrH(\fwdP_t|\frm)\leq 2\int\norm{\bar{\nabla}\sqrt{\rho_t}}_{\cH_\frm}^2\De \frm=4\,\cE(\sqrt{\rho_t})\,,
\ees
which combined with the latter gives
\bes
\begin{aligned}
\frac{\De}{\De t}\scrH(\fwdP_t|\frm)=\frac{\De}{\De t}\int(P_t\rho_0)\log(P_t\rho_0)\De\frm=\int(1+\log(P_t\rho_0))L(P_t\rho_0)\De \frm\\     
=-4\int\frac{\Gamma(\rho_t)}{\rho_t}\De\frm
=-4\int\norm{\bar{\nabla}\sqrt{\rho_t}}_{\cH_\frm}^2\De \frm\leq-\frac12\scrH(\fwdP_t|\frm)\,.
\end{aligned}
\ees
Therefore the first term can be bounded via the data processing inequality for relative entropy and Gronwall's lemma as 
\be\label{eq:LSI:conv:KL}
\scrH(\fwdP_T^J|\frm^J)\leq \scrH(\fwdP_T|\frm)\leq e^{-\nicefrac{T}{2}}\,\scrH(\fwdP_0|\frm)=e^{-\nicefrac{T}{2}}\,\scrH(\data|\frm)\,.
\ee

The second term from~\eqref{eq:bound:3:errori} is bounded by $T\varepsilon^2$, thanks to our assumption \Cref{ass:score:goodness}. 

Lastly, let us focus on the last term in~\eqref{eq:bound:3:errori}. 
 In order to do that, hereafter let us assume $\data$ to have finite eight moments, this extra assumption can be lifted via  a standard approximation procedure as in \cite[Lemma 3]{Conforti2025KLConvergence}.  Next, notice that 
by reasoning as in \cite[Proposition 2 and Equation (27)]{Conforti2025KLConvergence} we can deduce (see \Cref{lemma:gio}) that 
\be\label{SDE:score:process}
\De V^J_t(\bwdX_t^J)=\frac12 V^J_t(\bwdX_t^J)\De t+\Sigma^J\nabla V^J_t(\bwdX_t^J)\,\De B_t\,,
\ee
and 
\be\label{SDE:square:score}
    \begin{aligned}
\De \norm{V^J_t(\bwdX^J_t)}_{(\Sigma^J)^{-1}}^2=\biggl(\norm{V^J_t(\bwdX^J_t)}_{(\Sigma^J)^{-1}}^2+\norm{\nabla V^J_t(\bwdX^J_t)}^2_{\mathrm{Fr}}\biggr)\De t\\
+2\, V^J_t(\bwdX^J_t)\,\cdot(\Sigma^J)^{-1}\nabla V^J_t(\bwdX_t^J)\De B_t\,,
 \end{aligned}
\ee
with $\norm{\cdot}_{\mathrm{Fr}}$ denoting the Frobenious norm.
     By integrating~\eqref{SDE:score:process}, from Jensen's inequality  we may deduce that for any $s\in[t_k,t_{k+1}]$ it holds
\bes
\begin{aligned}
    \bbE_{\bwdP^J}&\norm{V^J_s(\bwdX^J_s)-V^J_{t_k}(\bwdX^J_{t_k})}_{(\Sigma^J)^{-1}}^2\\
    =&\,\bbE_{\bwdP^J}\norm{\frac12 \int_{t_k}^sV^J_u(\bwdX_u^J)\De u+\int_{t_k}^s\Sigma^J\nabla V^J_u(\bwdX_u^J)\,\De B_u}_{(\Sigma^J)^{-1}}^2\\
    \leq&\, \frac{s-t_k}2\int_{t_k}^s \bbE_{\bwdP^J}\norm{V^J_u(\bwdX^J_u)}_{(\Sigma^J)^{-1}}^2\De u+2\int_{t_k}^s \bbE_{\bwdP^J}\norm{\nabla V^J_u(\bwdX^J_u)}^2_{\mathrm{Fr}}\De u\\
    \leq &\,\max\{2,\nicefrac{h}{2}\}\,\int_{t_k}^{t_{k+1}}\bbE_{\bwdP^J}\norm{V^J_u(\bwdX^J_u)}_{(\Sigma^J)^{-1}}^2+\bbE_{\bwdP^J}\norm{\nabla V^J_u(\bwdX^J_u)}^2_{\mathrm{Fr}}\,\De u\,,
    \end{aligned}
    \ees
    where in the last step we have notice that $s-t_{k}\leq t_{k+1}-t_k=\nicefrac{T}{N}=h$.
    By integrating~\eqref{SDE:square:score}, since under the finite eight moment assumption  $ \int_{t_k}^\cdot V^J_t(\bwdX^J_t)\,\cdot(\Sigma^J)^{-1}\nabla V^J_t(\bwdX_t^J)\De B_t$ is a martingale, for any $s\in[t_k,t_{k+1}]$ we finally deduce that
    \bes
    \begin{aligned}
    \bbE_{\bwdP^J}&\norm{V^J_s(\bwdX^J_s)-V^J_{t_k}(\bwdX^J_{t_k})}_{(\Sigma^J)^{-1}}^2\\
    &\qquad\qquad\leq \max\{2,\nicefrac{h}{2}\}\,\biggl(\bbE_{\bwdP^J}\norm{V^J_{t_{k+1}}(\bwdX^J_{t_{k+1}})}_{(\Sigma^J)^{-1}}^2-\bbE_{\bwdP^J}\norm{V^J_{t_{k}}(\bwdX^J_{t_{k}})}_{(\Sigma^J)^{-1}}^2\biggr)\,,
    \end{aligned}
    \ees
    and hence that the third term in~\eqref{eq:bound:3:errori} can be bounded as follows
    \be\label{appo:bound:fisher:finita}
\begin{aligned}
    \sum_{k=0}^{M-1}\int_{t_k}^{t_{k+1}} &\bbE_{\bwdP^J}\norm{V^J_s(\bwdX^J_s)-V^J_{t_k}(\bwdX^J_{t_k})}_{(\Sigma^J)^{-1}}^2\De s\\
    \leq &\,h\,\max\{2,\nicefrac{h}{2}\} \sum_{k=0}^{M-1} \bbE_{\bwdP^J}\norm{V^J_{t_{k+1}}(\bwdX^J_{t_{k+1}})}_{(\Sigma^J)^{-1}}^2-\bbE_{\bwdP^J}\norm{V^J_{t_{k}}(\bwdX^J_{t_{k}})}_{(\Sigma^J)^{-1}}^2\\
    =&\,h\,\max\{2,\nicefrac{h}{2}\} \biggl(\bbE_{\bwdP^J}\norm{V^J_T(\bwdX^J_{T})}_{(\Sigma^J)^{-1}}^2-\bbE_{\bwdP^J}\norm{V^J_0(\bwdX^J_0)}_{(\Sigma^J)^{-1}}^2\biggr)\\
    \leq &\,h\,\max\{2,\nicefrac{h}{2}\} \,\bbE_{\bwdP^J}\norm{V^J_T(\bwdX^J_{T})}_{(\Sigma^J)^{-1}}^2\\
    =&\,h\,\max\{2,\nicefrac{h}{2}\} \,\bbE_{\fwdP^J}\norm{\Sigma^J\nabla\log\rho^J_0(\fwdX_0^J)}^2_2,
\end{aligned}
\ee
where in the last step we have used the definition of $V^J_s$ and the time-reversal identity. Next, since $\rmP^J_0=\data^J=(\mathrm{proj}_J)_{\#}\data$ and $\mathrm{proj}_J=I((C_{\ell_j}Y_{\ell_j})_{j=1,\dots,N})$  for any $\Phi_J\in\rset^N$ it holds
\bes
\rho^J_0(\Phi_J)=\data(\mathrm{proj}_J^{-1}(\Phi_J))=\int \rho_0([\Phi_J,\Phi_{J^\perp}]_\rmH)\De \frm^{J^\perp}( \Phi_{J^\perp})\,,
\ees
where $[\Phi_J,\Phi_{J^\perp}]_{\rmH}\in\rmH$ denotes the element whose coordinates are $(\Phi_J,\Phi_{J^\perp})$ and
where we have defined $\Phi^{J^\perp}$ as the projection of $\Phi\in\rmH$ onto the orthogonal complement to $\mathrm{Span}\{Y_{\ell_j}\colon j=1,\dots,N\}$, that is the subspace generated by the element of the orthonormal basis not in $J$, whereas $\frm^{J^\perp}$ is simply the pushforward under this orthogonal projection of $\frm$. 
If $e_j$ denotes the $j^{th}$ element of the standard basis for $\rset^N$, for any $j=1,\dots,N$ it holds 
\bes
\begin{aligned}
\partial_j\rho_0^J(\Phi_J)=\De_{t_{|t=0}}\rho_0^J(\Phi_J+t \,e_j)=\De_{t_{|t=0}}\int \rho_0([\Phi_J+t\,e_j,\Phi_{J^\perp}]_\rmH)\De \frm^{J^\perp}( \Phi_{J^\perp})\\
=\int \De_{t_{|t=0}}\rho_0([\Phi_J,\Phi_{J^\perp}]_\rmH+t\,Y_{\ell_j})\De \frm^{J^\perp}( \Phi_{J^\perp})\,,
\end{aligned}\ees
where the commutation between differentiation and integration will follow from the bound we provide below, which can be generalised to show that $\De_t\rho_0([\Phi_J,\cdot]_\rmH+t\,Y_{\ell_j})\in\rmL^1(\frm^{J^\perp})$ holds $\frm^J$-a.e.  (since $t\,Y_{\ell_j}\in\cH_\frm$).

 Therefore, from Jensen's inequality (applied to the jointly convex function $(a,b)\mapsto \nicefrac{a^2}{b}$) for any $j=1,\dots,\,N$ we have
\bes\begin{aligned}
(\partial_j\sqrt{\rho_0^J})^2(\Phi_J)=&\,\frac{(\partial_j\rho_0^J)^2}{4\,\rho^J_0}(\Phi_J)=\frac{(\De_{t_{|t=0}}\rho_0^J(\Phi_J+t\,e_j))^2}{4\,\rho^J_0(\Phi_J)}\\
=&\,\frac{\biggl(\int \De_{t_{|t=0}}\rho_0([\Phi_J,\Phi_{J^\perp}]_\rmH+t\,Y_{\ell_j})\De \frm^{J^\perp}( \Phi_{J^\perp})\biggr)^2}{\int 4\,\rho_0([\Phi_J,\Phi_{J^\perp}]_\rmH)\De \frm^{J^\perp}( \Phi_{J^\perp})}\\
\leq&\, \int \frac{(\De_{t_{|t=0}}\rho_0([\Phi_J,\Phi_{J^\perp}]_\rmH+t\,Y_{\ell_j})\,)^2}{4\,\rho_0([\Phi_J,\Phi_{J^\perp}]_\rmH)}\De \frm^{J^\perp}( \Phi_{J^\perp})\\
=&\,\int \biggl(\De_{t_{|t=0}}\sqrt{\rho_0}([\Phi_J,\Phi_{J^\perp}]_\rmH+t\,Y_{\ell_j})\biggr)^2\De \frm^{J^\perp}( \Phi_{J^\perp})\\
=&\,\int \langle\bar\nabla\sqrt{\rho_0}([\Phi_J,\Phi_{J^\perp}]_\rmH),\,Y_{\ell_j}\rangle_{\cH_\frm}^2\,\De \frm^{J^\perp}( \Phi_{J^\perp})\,.
\end{aligned}\ees
From this we may finally deduce that
 \be\label{appo:bound:Fisher:fin2infinity}
\begin{aligned}
    \bbE_{\fwdP^J}\norm{\Sigma^J\nabla\log\rho^J_0(\fwdX_0^J)}&^2_2
    =\,\bbE_{\data^J}\norm{\frac{\Sigma^J\nabla\rho^J_0}{\rho^J_0}}^2_2=4\,\bbE_{\frm^J}\norm{\Sigma^J\nabla\sqrt{\rho^J_0}}^2_2\\
    =&\,4\,\bbE_{\frm^J}\biggl[\sum_{j=1}^N C_{\ell_j}\,(\partial_j\sqrt{\rho^J_0})^2\biggr]\\
    \leq&\, 4\,\bbE_{\frm^J}\biggl[\sum_{j=1}^N C_{\ell_j}\,\int \langle\bar\nabla\sqrt{\rho_0}([\Phi_J,\Phi_{J^\perp}]_\rmH),\,  Y_{\ell_j}\rangle_{\cH_\frm}^2\,\De \frm^{J^\perp}( \Phi_{J^\perp})\biggr]\\
    =&\,4\,\bbE_{\Phi\sim\frm}\biggl[ \sum_{j=1}^N \langle\bar\nabla\sqrt{\rho_0}(\Phi),\,C_{\ell_j}^{\nicefrac12}\,Y_{\ell_j}\rangle_{\cH_\frm}^2\biggr]\leq 4\,\bbE_{\frm}\norm{\bar\nabla\sqrt{\rho_0}}^2_{\cH_\frm}\,,
\end{aligned}
\ee
where the last step follows from Bessel's inequality since the collection $\{C_\ell^{\nicefrac12}Y_\ell\}_{\ell\geq 0}$ is orthonormal in the Cameron-Martin space $\cH_\frm$.

By combining \eqref{appo:bound:fisher:finita} and \eqref{appo:bound:Fisher:fin2infinity} we deduce that
\bes
 \sum_{k=0}^{M-1}\int_{t_k}^{t_{k+1}} \bbE_{\bwdP^J}\norm{V^J_s(\bwdX^J_s)-V^J_{t_k}(\bwdX^J_{t_k})}_{(\Sigma^J)^{-1}}^2\De s\\
    \leq 2\,h\,\max\{4,h\} \,\bbE_{\frm}\norm{\bar\nabla\sqrt{\rho_0}}^2_{\cH_\frm}\,.
\ees

In conclusion, from~\eqref{eq:bound:3:errori}, \eqref{eq:LSI:conv:KL}, \Cref{ass:score:goodness} and from this last bound we finally get
\bes
\scrH(\data^J|\cL(Y_T^J))
    \leq e^{-\nicefrac{T}{2}}\,\scrH(\data|\frm)+T\,\varepsilon^2+2\,h\,\max\{4,h\} \,\bbE_{\frm}\norm{\bar\nabla\sqrt{\rho_0}}^2_{\cH_\frm}\,,
\ees
which corresponds to our thesis.
\end{proof}
For completeness, below we prove \eqref{SDE:score:process} and  \eqref{SDE:square:score}.

\begin{lemma}\label{lemma:gio}
For any fixed $J=(\ell_1,\dots,\ell_N)$ the score process $V^J_t(\bwdX^J_t)$ solves
 \bes
\De V^J_t(\bwdX_t^J)=\frac12 V^J_t(\bwdX_t^J)\De t+\Sigma^J\nabla V^J_t(\bwdX_t^J)\,\De B_t\,,
\ees
with $B_\cdot$ being an $N$-dimensional Brownian motion. In particular its square satisfies
\bes\begin{aligned}
\De \norm{V^J_t(\bwdX^J_t)}_{(\Sigma^J)^{-1}}^2=\biggl(\norm{V^J_t(\bwdX^J_t)}_{(\Sigma^J)^{-1}}^2+\norm{\nabla V^J_t(\bwdX^J_t)}^2_{\mathrm{Fr}}\biggr)\De t\\
+2\, V^J_t(\bwdX^J_t)\,\cdot(\Sigma^J)^{-1}\nabla V^J_t(\bwdX_t^J)\De B_t\,,
\end{aligned}
\ees
with $\norm{\cdot}_{\mathrm{Fr}}$ denoting the Frobenious norm.
\end{lemma}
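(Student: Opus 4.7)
The plan is to apply Itô's formula to the vector-valued function $(s,x)\mapsto V_s^J(x)$ along the backward trajectory $\bwdX^J$, which under $\bwdP^J$ solves the SDE with drift $-\tfrac12 \bwdX^J_s+V_s^J(\bwdX_s^J)$ and noise coefficient $\Sigma^J$. Since $V_s^J=C^J\nabla\log\rho^J_{T-s}$ and $\rho^J_u=\tfrac{\De\fwdP^J_u}{\De\frm^J}$ is the density of the $N$-dimensional OU process with respect to its invariant measure $\frm^J$, the key ingredient is the PDE satisfied by $\log\rho^J_u$.

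First I would use self-adjointness of $L^J$ in $\rmL^2(\frm^J)$ to obtain $\partial_u\rho^J_u=L^J\rho^J_u$, which by the chain rule for the logarithm gives the Hamilton--Jacobi-type identity
\begin{equation*}
\partial_u\log\rho^J_u=L^J\log\rho^J_u+\tfrac12\Gamma^J(\log\rho^J_u),\qquad \Gamma^J(f)=\sum_{j=1}^N C_{\ell_j}(\partial_j f)^2.
\end{equation*}
Differentiating in $x_i$ and using the elementary commutator $[\partial_i,L^J]=-\tfrac12\partial_i$ (immediate from $L^J=-\tfrac12 x\cdot\nabla+\tfrac12\sum_j C_{\ell_j}\partial^2_{jj}$), one finds that $(V^J_s)_i=C_{\ell_i}\partial_i\log\rho^J_{T-s}$ satisfies
\begin{equation*}
\partial_s(V_s^J)_i=-L^J(V_s^J)_i+\tfrac12(V_s^J)_i-\Gamma^J(\log\rho^J_{T-s},(V_s^J)_i).
\end{equation*}
Recognising the right-hand side as $\tfrac12(V_s^J)_i-\bwdL^J_s(V_s^J)_i$, where $\bwdL^J_s=L^J+\Gamma^J(\log\rho^J_{T-s},\cdot)$ is the backward generator of~\Cref{cor:back:SDE}, Itô's formula componentwise yields
\begin{equation*}
\De (V_s^J)_i(\bwdX_s^J)=\bigl[\partial_s(V_s^J)_i+\bwdL^J_s(V_s^J)_i\bigr](\bwdX_s^J)\De s+\bigl(\Sigma^J\nabla(V_s^J)_i\bigr)(\bwdX^J_s)\cdot\De B_s,
\end{equation*}
in which the two generator terms cancel and leave the advertised drift $\tfrac12 V_s^J$. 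Vectorising in $i$ gives the first SDE.

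For the second identity, I would apply Itô's formula once more to the quadratic form $\Phi(v)=v\cdot (C^J)^{-1}v=\norm{v}_{(\Sigma^J)^{-1}}^2$ composed with $V_s^J(\bwdX^J_s)$: the drift from $\tfrac12 V_s^J$ produces the $\norm{V_s^J}_{(\Sigma^J)^{-1}}^2$ term, the quadratic variation of the martingale part produces the Frobenius-type term (after using $(C^J)^{-1}(\Sigma^J)^2=\Id$ to see that the $C^J$-weights compensate and one is left with $\sum_{i,k}(\partial_k V_i^J)^2$), and the remaining stochastic integral is $2V^J_t\cdot(\Sigma^J)^{-1}\nabla V^J_t\,\De B_t$, as stated. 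The main obstacle is purely bookkeeping: keeping track of the covariance weights $C_{\ell_i}$ across the chain rule, the commutator and the two applications of Itô's formula, and checking that the $C^J$-induced rescalings in $V^J_s=C^J\nabla\log\rho^J_{T-s}$ conspire correctly with $(\Sigma^J)^{-1}$ in the definition of the weighted norm so that the Frobenius-norm quadratic variation term emerges unweighted.
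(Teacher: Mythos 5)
Your proof is correct and follows essentially the same route as the paper: both rest on the Kolmogorov-forward equation $\partial_t\rho^J = L^J\rho^J$, a differentiated PDE for the score, and Itô's formula along the backward process. Where the paper cites Conforti et al.\ and simply writes ``from Itô's formula we see that'', you make the hidden step explicit by passing to the Hamilton--Jacobi equation for $\log\rho^J$, using the commutator $[\partial_i,L^J]=-\tfrac12\partial_i$, and observing the cancellation against the backward generator $\bwdL^J_s$ — the same computation at a finer level of detail.
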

\begin{proof} 
The following proof is an adaptation of~\cite[Proposition 2 and Equation (27)]{Conforti2025KLConvergence} with diffusivity matrix $\Sigma^J$. Since $\rho^J_t$ satisfies
\bes
\partial_t\rho^J=-\frac12\langle x,\,\nabla\rho^J_t\rangle+\frac12\,\Delta_{C^J}\rho^J_t\,,
\ees
with $\Delta_{C^J}$ being the Laplacian operator induced by $C^J$, \ie, $\Delta_{C^J}f\coloneqq \nabla\cdot(C^J\nabla f)$, from Ito's formula we see that
     \bes
\De V^J_t(\bwdX_t^J)=\frac12 V^J_t(\bwdX_t^J)\De t+\Sigma^J\nabla V^J_t(\bwdX_t^J)\,\De B_t\,,
\ees
which implies 
\bes
\begin{aligned}
\De &\norm{V^J_t(\bwdX^J_t)}_{(\Sigma^J)^{-1}}^2=2\langle V^J_t(\bwdX^J_t),\,(C^J)^{-1}\De V^J_t(\bwdX_t^J)\rangle+\Tr[(\nabla V^J_t(\bwdX^J_t))^2] \De t\\
&= \biggl(\norm{V^J_t(\bwdX^J_t)}_{(\Sigma^J)^{-1}}^2+\norm{\nabla V^J_t(\bwdX^J_t)}^2_{\mathrm{Fr}}\biggr)\De t+2\, V^J_t(\bwdX^J_t)\,\cdot(\Sigma^J)^{-1}\nabla V^J_t(\bwdX_t^J)\De B_t\,.
\end{aligned}
\ees
\end{proof}


%
%
\bibliographystyle{alpha}
\bibliography{X2main}

\newcommand{\etalchar}[1]{$^{#1}$}
\begin{thebibliography}{HDBORSB23}

\bibitem[ACM{\etalchar{+}}22]{adam2022posterior}
Alexandre Adam, Adam Coogan, Nikolay Malkin, Ronan Legin, Laurence
  Perreault-Levasseur, Yashar Hezaveh, and Yoshua Bengio.
\newblock {Posterior samples of source galaxies in strong gravitational lenses
  with score-based priors}.
\newblock {\em arXiv preprint arXiv:2211.03812}, 2022.

\bibitem[AGS08]{greenbook}
Luigi Ambrosio, Nicola Gigli, and Giuseppe Savar{\'e}.
\newblock {\em Gradient {F}lows in {M}etric {S}paces and in the {S}pace of
  {P}robability {M}easures}.
\newblock Lectures in Mathematics. ETH Zürich. Birkhäuser Basel, 2 edition,
  2008.

\bibitem[ALOL25]{andrae2025continuous}
Martin Andrae, Tomas Landelius, Joel Oskarsson, and Fredrik Lindsten.
\newblock {Continuous Ensemble Weather Forecasting with Diffusion models}.
\newblock In {\em The Thirteenth International Conference on Learning
  Representations}, 2025.

\bibitem[AR89]{Albeverio1989Classical}
Sergio Albeverio and Michael R{\"o}ckner.
\newblock Classical {D}irichlet forms on topological vector spaces—the
  construction of the associated diffusion process.
\newblock {\em Probability Theory and Related Fields}, 83(3):405--434, 1989.

\bibitem[AR91]{Albeverio1991SDEviaDirichletForms}
Sergio Albeverio and Michael R{\"o}ckner.
\newblock Stochastic differential equations in infinite dimensions: solutions
  via {D}irichlet forms.
\newblock {\em Probability Theory and Related Fields}, 89(3):347--386, 1991.

\bibitem[BGL13]{bakry2013analysis}
Dominique Bakry, Ivan Gentil, and Michel Ledoux.
\newblock {\em Analysis and geometry of Markov diffusion operators}, volume
  348.
\newblock Springer Science \& Business Media, 2013.

\bibitem[Bog98]{Bogachev1998book}
Vladimir~I. Bogachev.
\newblock {\em Gaussian Measures}, volume~62 of {\em Mathematical Surveys and
  Monographs}.
\newblock American Mathematical Society, Providence, RI, 1998.

\bibitem[BS25]{bruno2025wasserstein}
Stefano Bruno and Sotirios Sabanis.
\newblock Wasserstein convergence of score-based generative models under
  semiconvexity and discontinuous gradients.
\newblock {\em Transactions on Machine Learning Research}, 2025.

\bibitem[CCGL23]{cattiaux2021time}
Patrick Cattiaux, Giovanni Conforti, Ivan Gentil, and Christian L{\'e}onard.
\newblock {Time reversal of diffusion processes under a finite entropy
  condition}.
\newblock {\em Annales de l'Institut Henri Poincaré, Probabilités et
  Statistiques}, 59(4):1844 -- 1881, 2023.

\bibitem[CDGS25]{Conforti2025KLConvergence}
Giovanni Conforti, Alain Durmus, and Marta Gentiloni-Silveri.
\newblock Kl convergence guarantees for score diffusion models under minimal
  data assumptions.
\newblock {\em SIAM Journal on Mathematics of Data Science}, 7(1):1--25, 2025.

\bibitem[{Cop}23]{copernicus_era5_2023}
{Copernicus Climate Change Service (C3S)}.
\newblock Era5 hourly data on single levels from 1940 to present, 2023.
\newblock Accessed on 22-SEP-2025.

\bibitem[DN21]{dhariwal2021diffusion}
Prafulla Dhariwal and Alexander~Quinn Nichol.
\newblock {Diffusion Models Beat {GAN}s on Image Synthesis}.
\newblock In A.~Beygelzimer, Y.~Dauphin, P.~Liang, and J.~Wortman Vaughan,
  editors, {\em Advances in Neural Information Processing Systems}, 2021.

\bibitem[EWSB24]{elhag2024manifold}
Ahmed A.~A. Elhag, Yuyang Wang, Joshua~M. Susskind, and Miguel~{\'A}ngel
  Bautista.
\newblock {Manifold Diffusion Fields}.
\newblock In {\em The Twelfth International Conference on Learning
  Representations}, 2024.

\bibitem[FCD{\etalchar{+}}24]{floss2024Delensing}
Thomas Flöss, William~R Coulton, Adriaan~J Duivenvoorden, Francisco
  Villaescusa-Navarro, and Benjamin~D Wandelt.
\newblock {Denoising diffusion delensing: reconstructing the non-Gaussian CMB
  lensing potential with diffusion models}.
\newblock {\em Monthly Notices of the Royal Astronomical Society},
  533(1):423--432, 07 2024.

\bibitem[GHB{\etalchar{+}}05]{2005ApJ...622..759G}
K.~M. {G{\'o}rski}, E.~{Hivon}, A.~J. {Banday}, B.~D. {Wandelt}, F.~K.
  {Hansen}, M.~{Reinecke}, and M.~{Bartelmann}.
\newblock {HEALPix: A Framework for High-Resolution Discretization and Fast
  Analysis of Data Distributed on the Sphere}.
\newblock {\em apj}, 622:759--771, April 2005.

\bibitem[GSO25]{gentiloni2025beyond}
Marta Gentiloni-Silveri and Antonio Ocello.
\newblock {Beyond Log-Concavity and Score Regularity: Improved Convergence
  Bounds for Score-Based Generative Models in W2-distance}.
\newblock In {\em Forty-second International Conference on Machine Learning},
  2025.

\bibitem[Hai23]{hairer2023spdesNOTES}
Martin Hairer.
\newblock {An Introduction to Stochastic PDEs}, 2023.
\newblock Lecture notes.

\bibitem[HBB{\etalchar{+}}18]{hersbach_era5_2018}
H.~Hersbach, B.~Bell, P.~Berrisford, G.~Biavati, A.~Horányi,
  J.~Muñoz~Sabater, J.~Nicolas, C.~Peubey, R.~Radu, I.~Rozum, D.~Schepers,
  A.~Simmons, C.~Soci, D.~Dee, and J.-N. Thépaut.
\newblock Era5 hourly data on single levels from 1940 to present, 2018.
\newblock Accessed on 22-SEP-2025.

\bibitem[HDBORSB23]{heurtel2023removing}
David Heurtel-Depeiges, Blakesley Burkhart, Ruben Ohana, and Bruno
  R{\'e}galdo-Saint~Blancard.
\newblock {Removing Dust from CMB Observations with Diffusion Models}.
\newblock {\em arXiv preprint arXiv:2310.16285}, 2023.

\bibitem[HJA20]{NEURIPS2020_4c5bcfec}
Jonathan Ho, Ajay Jain, and Pieter Abbeel.
\newblock Denoising diffusion probabilistic models.
\newblock In H.~Larochelle, M.~Ranzato, R.~Hadsell, M.F. Balcan, and H.~Lin,
  editors, {\em {Advances in Neural Information Processing Systems}},
  volume~33, pages 6840--6851. Curran Associates, Inc., 2020.

\bibitem[HMR{\etalchar{+}}25]{hagemann2023multilevel}
Paul Hagemann, Sophie Mildenberger, Lars Ruthotto, Gabriele Steidl, and
  Nicole~Tianjiao Yang.
\newblock {Multilevel Diffusion: Infinite Dimensional Score-Based Diffusion
  Models for Image Generation}.
\newblock {\em SIAM Journal on Mathematics of Data Science}, 7(3):1337--1366,
  2025.

\bibitem[Kle14]{klenke}
Achim Klenke.
\newblock {\em Probability Theory: A Comprehensive Course}.
\newblock Springer, 2014.

\bibitem[L{\'e}o12]{Leonard2012Girsanov}
Christian L{\'e}onard.
\newblock Girsanov theory under a finite entropy condition.
\newblock In {\em S{\'e}minaire de Probabilit{\'e}s XLIV}, volume 2046 of {\em
  Lecture Notes in Mathematics}, pages 429--465. Springer, 2012.

\bibitem[LHL{\etalchar{+}}23]{Legin2023Posterior}
Ronan Legin, Matthew Ho, Pablo Lemos, Laurence Perreault-Levasseur, Shirley Ho,
  Yashar Hezaveh, and Benjamin Wandelt.
\newblock {Posterior Sampling of the Initial Conditions of the Universe from
  Non-linear Large Scale Structures using Score-Based Generative Models}.
\newblock {\em Monthly Notices of the Royal Astronomical Society: Letters},
  527(1):L173--L178, 2023.

\bibitem[LKB{\etalchar{+}}25]{lim2023score}
Jae~Hyun Lim, Nikola~B. Kovachki, Ricardo Baptista, Christopher Beckham, Kamyar
  Azizzadenesheli, Jean Kossaifi, Vikram Voleti, Jiaming Song, Karsten Kreis,
  Jan Kautz, Christopher Pal, Arash Vahdat, and Anima Anandkumar.
\newblock Score-based diffusion models in function space.
\newblock {\em Journal of Machine Learning Research}, 26(158):1--62, 2025.

\bibitem[LLT22]{lee2022convergence}
Holden Lee, Jianfeng Lu, and Yixin Tan.
\newblock Convergence for score-based generative modeling with polynomial
  complexity.
\newblock In Alice~H. Oh, Alekh Agarwal, Danielle Belgrave, and Kyunghyun Cho,
  editors, {\em Advances in Neural Information Processing Systems}, 2022.

\bibitem[LLT23]{pmlr-v201-lee23a}
Holden Lee, Jianfeng Lu, and Yixin Tan.
\newblock Convergence of score-based generative modeling for general data
  distributions.
\newblock In Shipra Agrawal and Francesco Orabona, editors, {\em Proceedings of
  The 34th International Conference on Algorithmic Learning Theory}, volume 201
  of {\em Proceedings of Machine Learning Research}, pages 946--985. PMLR, 20
  Feb--23 Feb 2023.

\bibitem[MGW{\etalchar{+}}25]{mirafzali2025malliavincalculusscorebaseddiffusion}
Ehsan Mirafzali, Utkarsh Gupta, Patrick Wyrod, Frank Proske, Daniele Venturi,
  and Razvan Marinescu.
\newblock {Malliavin Calculus for Score-based Diffusion Models}.
\newblock {\em arXiv preprint arXiv:2503.16917}, 2025.

\bibitem[MP11]{marinucci2011random}
Domenico Marinucci and Giovanni Peccati.
\newblock {\em {Random Fields on the Sphere: Representation, Limit Theorems and
  Cosmological Applications}}, volume 389 of {\em London Mathematical Society
  Lecture Note Series}.
\newblock Cambridge University Press, 2011.

\bibitem[MPG{\etalchar{+}}25]{mirafzali2025malliavincalculusapproachscore}
Ehsan Mirafzali, Frank Proske, Utkarsh Gupta, Daniele Venturi, and Razvan
  Marinescu.
\newblock A {M}alliavin calculus approach to score functions in diffusion
  generative models.
\newblock {\em arXiv:2507.05550}, 2025.

\bibitem[MPVM25]{mirafzali2025scorebaseddiffusionmodelsinfinite}
Ehsan Mirafzali, Frank Proske, Daniele Venturi, and Razvan Marinescu.
\newblock {Score-Based Diffusion Models in Infinite Dimensions: A Malliavin
  Calculus Perspective}, 2025.

\bibitem[MR92]{MaRockner1992book}
Zhi~Ming Ma and Michael Röckner.
\newblock {\em Introduction to the {T}heory of ({N}on-{S}ymmetric) {D}irichlet
  {F}orms}.
\newblock Universitext. Springer, 1992.

\bibitem[NP12]{NourdinPeccati2012book}
Ivan Nourdin and Giovanni Peccati.
\newblock {\em {N}ormal {A}pproximations with {M}alliavin {C}alculus: {F}rom
  {S}tein's {M}ethod to {U}niversality}, volume 192 of {\em Cambridge Tracts in
  Mathematics}.
\newblock Cambridge University Press, Cambridge, 2012.

\bibitem[Nua06]{Nualart2006}
David Nualart.
\newblock {\em The Malliavin Calculus and Related Topics}.
\newblock Probability and Its Applications. Springer, Berlin, Heidelberg, 2nd
  edition, 2006.

\bibitem[Nut21]{Marcel:notes}
Marcel Nutz.
\newblock Introduction to {E}ntropic {O}ptimal {T}ransport.
\newblock {\em
  \url{http://www.math.columbia.edu/~mnutz/docs/EOT_lecture_notes.pdf}}, 2021.

\bibitem[OPM{\etalchar{+}}24]{Ono2024Debiasing}
Victoria Ono, Core~Francisco Park, Nayantara Mudur, Yueying Ni, Carolina
  Cuesta-Lazaro, and Francisco Villaescusa-Navarro.
\newblock {Debiasing with Diffusion: Probabilistic Reconstruction of Dark
  Matter Fields from Galaxies with CAMELS}.
\newblock {\em The Astrophysical Journal}, 970(2):174, 2024.

\bibitem[PBDE{\etalchar{+}}25]{pidstrigach2025conditioning}
Jakiw Pidstrigach, Elizabeth Baker, Carles Domingo-Enrich, George
  Deligiannidis, and Nikolas Nüsken.
\newblock {Conditioning Diffusions Using Malliavin Calculus}.
\newblock {\em arXiv preprint arXiv:2504.03461}, 2025.

\bibitem[PMRW24]{Pidstrigach2024Infinite}
Jakiw Pidstrigach, Youssef Marzouk, Sebastian Reich, and Sven Wang.
\newblock Infinite-{D}imensional {D}iffusion {M}odels.
\newblock {\em Journal of Machine Learning Research}, 25(414):1--52, 2024.

\bibitem[PSGA{\etalchar{+}}25]{Price2025Probabilistic}
Ilan Price, Alvaro Sanchez-Gonzalez, Ferran Alet, Tom~R. Andersson, Andrew
  El-Kadi, Dominic Masters, Timo Ewalds, Jacklynn Stott, Shakir Mohamed, Peter
  Battaglia, Remi Lam, and Matthew Willson.
\newblock Probabilistic weather forecasting with machine learning.
\newblock {\em Nature}, 637:84--90, 2025.

\bibitem[RRY25]{ren2025unified}
Yinuo Ren, Grant~M. Rotskoff, and Lexing Ying.
\newblock {A Unified Approach to Analysis and Design of Denoising Markov
  Models}.
\newblock {\em arXiv preprint arXiv:2504.01938}, 2025.

\bibitem[Sch90]{Schmuland1990}
Byron Schmuland.
\newblock An {A}lternative {C}ompactification for {C}lassical {D}irichlet
  {F}orms on {T}opological {V}ector {S}paces.
\newblock {\em Stochastics and Stochastic Reports}, 33(1-2):75--90, 1990.

\bibitem[SDWMG15]{pmlr-v37-sohl-dickstein15}
Jascha Sohl-Dickstein, Eric Weiss, Niru Maheswaranathan, and Surya Ganguli.
\newblock {Deep Unsupervised Learning using Nonequilibrium Thermodynamics}.
\newblock In Francis Bach and David Blei, editors, {\em Proceedings of the 32nd
  International Conference on Machine Learning}, volume~37 of {\em Proceedings
  of Machine Learning Research}, pages 2256--2265, Lille, France, 07--09 Jul
  2015. PMLR.

\bibitem[SLH24]{Schanz2024Stochastic}
Andreas Schanz, Florian List, and Oliver Hahn.
\newblock {Stochastic Super-resolution of Cosmological Simulations with
  Denoising Diffusion Models}.
\newblock {\em The Open Journal of Astrophysics}, 7, 2024.

\bibitem[SOB{\etalchar{+}}25]{strasman2025an}
Stanislas Strasman, Antonio Ocello, Claire Boyer, Sylvain~Le Corff, and Vincent
  Lemaire.
\newblock {An analysis of the noise schedule for score-based generative
  models}.
\newblock {\em Transactions on Machine Learning Research}, 2025.

\bibitem[SSDK{\etalchar{+}}21]{song2021scorebased}
Yang Song, Jascha Sohl-Dickstein, Diederik~P Kingma, Abhishek Kumar, Stefano
  Ermon, and Ben Poole.
\newblock Score-based generative modeling through stochastic differential
  equations.
\newblock In {\em International Conference on Learning Representations}, 2021.

\bibitem[ZSL{\etalchar{+}}19]{Zonca2019}
Andrea Zonca, Leo Singer, Daniel Lenz, Martin Reinecke, Cyrille Rosset, Eric
  Hivon, and Krzysztof Gorski.
\newblock healpy: equal area pixelization and spherical harmonics transforms
  for data on the sphere in python.
\newblock {\em Journal of Open Source Software}, 4(35):1298, March 2019.

\end{thebibliography}
%

\end{document}